\documentclass[a4paper,UKenglish,cleveref, autoref, thm-restate]{lipics-v2021}

\title{Asymptotic Transfer in Critical Recursive Composition Schemes}
\author{Michael Drmota}{TU Wien, Institute of Discrete Mathematics and Geometry, Wiedner Hauptstrasse 8-10, A1040 Vienna, Austria}{michael.drmota@tuwien.ac.at}{https://orcid.org/0000-0002-6876-6569}{}
\author{Zéphyr Salvy}{TU Wien, Institute of Discrete Mathematics and Geometry, Wiedner Hauptstrasse 8-10, A1040 Vienna, Austria}{zephyr.salvy@tuwien.ac.at}{}{}

\authorrunning{M. Drmota and Z. Salvy} 

\Copyright{Michael Drmota and Zéphyr Salvy} 

\ccsdesc{Mathematics of computing~Combinatorics}
\ccsdesc{Mathematics of computing~Discrete mathematics}
\ccsdesc{Mathematics of computing~Enumeration}
\ccsdesc{Mathematics of computing~Generating functions}
\ccsdesc{Mathematics of computing~Probability and statistics}
\ccsdesc{Mathematics of computing~Stochastic processes}

\funding{This research was supported by the Austrian Science Fund (FWF) [10.55776/F1002].}

\keywords{Analytic Combinatorics, Central Limit Theorem, Pattern Counts, Random Planar Maps, Singularity Analysis}

\hideLIPIcs
\nolinenumbers

\EventEditors{John Q. Open and Joan R. Access}
\EventNoEds{2}
\EventLongTitle{42nd Conference on Very Important Topics (CVIT 2016)}
\EventShortTitle{CVIT 2016}
\EventAcronym{CVIT}
\EventYear{2016}
\EventDate{December 24--27, 2016}
\EventLocation{Little Whinging, United Kingdom}
\EventLogo{}
\SeriesVolume{42}
\ArticleNo{23}

\renewcommand{\(}{\left(}
\renewcommand{\)}{\right)}

\newcommand{\twoc}{\texorpdfstring{$2$}{2}-connected}
\newcommand{\nnsi}{necessarily non-self-intersecting}

\newcommand{\ellgon}{pure \texorpdfstring{$\ell$}{ell}-gon}
\newcommand{\ellgons}{\ellgon s}

\newcommand{\p}{\mathfrak{p}}
\newcommand{\m}{\mathfrak{m}}
\renewcommand{\b}{\mathfrak{b}}

\newcommand{\withroot}{^{(r)}}
\newcommand{\subfamily}{_L}

\newcommand{\xface}{x_{\ell}}
\newcommand{\xfacer}{\xface\withroot}
\newcommand{\x}{\mathbf{x}\subfamily}
\newcommand{\xr}{\x\withroot}
\newcommand{\X}{\mathbf{X}}

\newcommand{\xellgon}{\widehat{\xface}}
\newcommand{\xellgonr}{\xellgon\withroot}
\newcommand{\xallgons}{\widehat{\x}}
\newcommand{\xallgonsr}{\xallgons\withroot}

\newcommand{\xpatt}{x_{\p}}

\crefname{equation}{}{Equations} 
\Crefname{equation}{Equation}{Equations}

\begin{document}

\maketitle

\begin{abstract}
The composition $\mathcal{F} \circ \mathcal{G}$ of two combinatorial classes $\mathcal{F}$ and $\mathcal{G}$ is a standard combinatorial construction and translates into the composition $F(G(z))$ of their corresponding counting generating functions. Such a composition is called critical if $G(\rho_G) = \rho_F$, where $\rho_F$ and $\rho_G$ denote the corresponding radii of convergences of $F$ and $G$, respectively. In this case, both the singular behaviours of $F$ and $G$ influence that of $F\circ G$. Such critical decomposition schemes appear quite frequently in the context of map enumeration. For example by using the block-decomposition one has $M(z) = B\(z\(1+M(z)\)^2\)$ and $\rho_B = \rho_M \(1+M(\rho_M)\)^2$, where $M(z)$ denotes the generating series of all rooted planar maps and $B(y)$ the generating series of $2$-connected rooted planar maps. This can be extended to multivariate generating functions by taking several statistics into account, for example face counts. Since critical composition schemes show (usually) a condensation phenomenon -- in the above situation this means that there is \emph{giant} $2$-connected  block of linear size and linearly many small blocks -- it is very plausible that statistical properties on $2$-connected maps transfer to corresponding properties of all maps and back. The purpose of the present paper is to make this precise on the level of the singular structure of the corresponding multivariate generating functions. In particular we show that \emph{moving $3/2$-singularities} transfer. Since such kind of singularities are closely related to central limit theorems of the corresponding statistics this methods provides also a kind of transfer of central limit theorems. Actually this method is quite flexible and is applied to a variety of face and pattern counting statistics in map enumeration.
\end{abstract}

\section{Introduction}
A natural operation in combinatorics is to express a combinatorial class as a composition of two (potentially simpler) combinatorial classes:
\[\mathcal{H} = \mathcal{F} \circ \mathcal{G};\]
where the $\circ$ serves to express that every atom of an element of $\mathcal{F}$ is replaced by an element of $\mathcal{G}$. Such expression is called a \emph{composition scheme}. For $\mathcal{A}$ a combinatorial class, write $A(x)$ for its generating function and $\rho_A$ for the radius of convergence of $A(x)$. It is known that for a composition scheme, the behaviour of the class $\mathcal{H}$ depends on the relative position of $G(\rho_G)$ and $\rho_F$ (see, \emph{e.g.}, \cite[\S VI.9]{flajolet_sedgewick_2009}). Such a composition scheme
is called  \emph{critical} if  $G(\rho_G) = \rho_F$. In this case, both the singular behaviours of $F$ and $G$ influence that of $H$.

In this paper we consider recursive composition schemes, that is, we have
$\mathcal{G} = \mathcal{F} \circ \mathcal{G}$, such that 
the composition $\mathcal{F} \circ \mathcal{G}$ is critical.
Such composition schemes are very present in the map world: for example, consider
rooted planar maps. By using the block-decomposition starting at the block that contains the root edge it was already obtained by Tutte \cite{tutte_1963} that
\[M(z) = B\(z\(1+M(z)\)^2\)\qquad\text{and}\qquad \rho_B = \rho_M \(1+M(\rho_M)\)^2,\]
where $M(z)$ denotes the generating series of all rooted planar maps and $B(y)$ the generating series of $2$-connected rooted planar maps (see \cref{subsec:defin-maps} for precise definitions).
In 2001, Banderier, Flajolet, Schaeffer and Soria listed eleven such composition schemes for maps and proposed to study them all at once using the properties of the composition schemes \cite{airy}.

In the context of a critical composition scheme for maps, a \emph{condensation} phenomenon is generally present: a macroscopic block concentrates a linear portion of the mass, while the rest is concentrated in a linear number of smaller blocks \cite{airy}.
Thus, it would be natural to transfer proper statistics and/or shape parameters
from $2$-connected maps to connected maps and vice versa. This has been done, for example, for the degree distribution \cite{zbMATH06297799}. Since the degree distribution is directly related to the asymptotic behaviour of the expected numbers of vertices (or faces) of given degree
these first order statistics transfer, too. It seems, however, that a second order 
transfer, for example a transfer of a central limit theorem, is not that direct.
Suppose that $X_n^{(d)}$ denotes the random number of faces of degree $d$ in 
random planar maps with $n$ edges, and $Y_n^{(d)}$ the random number of faces of degree $d$ in $2$-connected random planar maps with $n$ edges. Then it is known that
both, $X_n^{(d)}$ and $Y_n^{(d)}$ satisfy central limit theorems with asymptotically 
linear expected values and variances, see \cite{DP13}, however, there is -- up to now -- no
direct transfer of these results (although this is possible for the 
asymptotics of the expected values $\mathbb{E} X_n^{(d)}$ and $\mathbb{E} Y_n^{(d)}$).
Such a transfer result would be even more interesting between $2$-connected and
$3$-connected planar maps (by using, for example, the Tutte decomposition). 
At the moment it is fully open, whether the numbers $Z_n^{(d)}$ of faces 
of degree in $3$-connected planar maps satisfy a central limit theorem. 

The purpose of this article is to present an approach of such a transfer on the level
of the singular structure of the corresponding bivariate generating functions
$M(z,x)$ and $B(z,x)$ where the second variable $x$ takes care of the parameter 
of interest (for example, the number of faces of degree $\ell$ that are \ellgons) and
when it is possible to express $M(z,x)$ as a function involving $B(z,x)$ composed
with $M(z,x)$ (see also \cref{prop:M1-M4-joint-ellgons} with a slightly different
notation: $M = M_1$, $B = M_4$) 
\begin{equation}\label{eqMBzxrel}
{M}(z,x) = {B}\(z\(1+{M}(z,x)\)^2,\frac{\(1+{M}(z,x)\)^\ell-1 + x}{\(1+{M}(z,x)\)^\ell}\).
\end{equation}
It turns out that $B(z,x)$ has a (so-called) \emph{moving $3/2$-singularity} if
and only if $M(z,x)$ has a moving $3/2$-singularity. It is well known that
moving $3/2$-singularities (and similarly moving singularities) are
closely related to a central limit of the corresponding random variable (that is
encoded by the variable $x$), see \cite{Drmota09}, this methods provide also a kind of 
\emph{ transfer of central limit theorems}.

This article is also part of ongoing work in the study of maps, which aims to understand how the number of pattern occurrences behave in (subfamilies of) random planar maps. 
(The simplest pattern is a face of given degree. Thus, pattern counts generalize
face counts of given degree). It is expected that for most ``natural'' patterns, the number of occurrences in a map with $n$ edges follows a central limit theorem  as $n\to\infty$. Drmota and Stufler showed that in planar maps sampled from a (regular critical) Boltzmann distribution, the expected number of pattern occurrences of a given map is asymptotically linear when the number of edges goes to infinity \cite{DS19}. However, the study of the other moments of the distribution requires careful manipulations. For face counts, it has been shown for the classes of all maps and $2$-connected maps \cite{DP13} and for simple maps \cite{Yu19}. In the case of general maps, a joint central limit theorem for all face degrees even holds \cite{CDK19}. Moreover, a 
central limit theorem (CLT) holds for patterns which are \emph{\ellgons} \cite{Yu19} and generally for all patterns with a simple boundary \cite{DHW25}. The present paper extends the list 
of CLT's of that kind.

\begin{table}
\begin{tabular}{l | l l}
Family & Joint CLT & CLT for $\sim$ patterns \\
\hline
Loopless maps & \ellgons{} \& faces & nnsi loopless with simple boundary \\
(Bip) bridgeless maps & \ellgons{} \& faces & nnsi (bip) bridgeless  with simple boundary \\
(Bip) $2$-connected maps & \ellgons{} (= faces) & nnsi (bip) $2$-connected\\
(Bip) simple maps & \ellgons{} \& faces &\\
(Bip) $2$-connected simple maps &\ellgons{} (= faces) &
\end{tabular}
\caption{CLT Results of this article, where ``bip'' abbreviates ``bipartite'' and ``nnsi'' ``\nnsi''. The first line reads ``In the family of loopless maps, we show a joint CLT for \ellgons{} and for face counts, and we show a CLT for \nnsi{} loopless patterns with simple boundary.''}
\label{tab:results}
\end{table}

\subparagraph{Results} Our results are of two natures. First, we show that in several combinatorial 
composition schemes it is also possible to keep track of the statistics of proper parameters like
face counts (resulting in relations of the kind \cref{eqMBzxrel} for corresponding
bi- (and multi-)variate generating series. Secondly we provide a quite general method
how (moving) singularities can be transferred in critical recursive composition schemes.
The corresponding obtained CLT's are summarised in \cref{tab:results}.
(It should be noted that CLT for face counts in $2$-connected maps was already obtained 
by Drmota and Panagiotou \cite{DP13}, and a CLT for \nnsi{} $2$-connected patterns in $2$-connected maps has been obtained by a different method by Hainzl.)

\subparagraph{Plan of the paper} In \cref{sec:defin-notation}, we recall definitions and known results as well as introduce notation necessary for the rest of our work. Then, in \cref{sec:combinatorics}, we use combinatorics to write generating function equations for maps where face (and pattern) occurrences are taken into account. In \cref{sec:transfer-sg-analysis}, we express our result for transfer of singular behaviours, which we apply in \cref{sec:applications} to the equations obtained in \cref{sec:combinatorics}.

\section{Preliminaries}
\label{sec:defin-notation}
\subsection{Singularity notions and a related central limit theorem}
We say that an analytic function $f(z)$ has a \emph{$3/2$-singularity at $z_0$} if $f(z)$ has a local representation of the form
\begin{equation}\label{eq32sing}
    f(z) = g(z) + h(z) (z-z_0)^{3/2},
\end{equation}
where $g(z)$ and $h(z)$ are functions that are analytic at $z=z_0$ with $h(z_0) \ne 0$. We say that an analytic function $f(z,x)$ has a \emph{moving $3/2$-singularity} if $f(z,1)$ has a $3/2$-singularity at some $z_0>0$ and, for $x$ close to $1$, $f(z,x)$ has a local representation of the form
\begin{equation}\label{eq32sing2}
f(z,x) = g(z,x) + h(z,x) (z - \rho(x))^{3/2}
\end{equation}
with $\rho(x)$, $g(z,x)$ and $h(z,x)$ analytic at $x=1$, $\rho(1) = z_0$, and, close to $z_0$, it holds that $g(z,1) = g(z)$ and $h(z,1) = h(z)$. Then, for $x$ close to $1$, $f(z,x)$ also has the following local representation
\[f(z,x) = a_0(x) + a_2(x)\(z - \rho(x)\) + a_3(x)\(z - \rho(x)\)^{3/2} + O\(\(z - \rho(x)\)^{2}\)\]
where the functions $a_j(x)$ are analytic at $x = 1$. This notion of moving $3/2$-singularity can be extended to $f(z,\x)$ where $\x$ is a vector of parameters.

In the context of combinatorial classes $\mathcal{F}$, $f(z,x)$ usually denotes the 
generating series $f(z,x) = \sum_{n,k} f_{n,k} z^n x^k$, where $f_{n,k}$ denotes the number
of objects in $\mathcal{F}$ of size $n$, where another parameter (for example, the number
of faces of given degree) has value $k$. Clearly we have $f_{n,k} \ge 0$ and if we assume that
every object in $\mathcal{F}$ of size $n$ is equally likely then the considered parameter
can be viewed as a random variable $X_n$ with 
\[
\mathbb{P}[X_n = k] = \frac{f_{n,k}}{f_n} = \frac{[z^n x^k]\, f(z,x) }{[z^n]\, f(z,1)}.
\]

\begin{proposition}
\label{Pro0}
Let $f(z,x)$ be the generating function of a combinatorial class $\mathcal{F}$ with $z$ marking the size of the object and $x$ the number of occurrences of a parameter. Then, if $f(z,x)$ has a moving $3/2$-singularity
of the form \cref{eq32sing2} with $\rho'(1) \ne 0$, and if, in a neighbourhood of $x=1$, $\rho(x)$ is the unique dominant singularity of $f(z,x)$ for $|z|= |\rho(x)|$, actually if $f(z,x)$ can be
analytically continued for $|z| = |\rho(x)|$ and $x\ne 1$, then 
$X_n$ follows a central limit theorem:
\[
\frac{X_n - \mathbb{E}\, X_n}{\sqrt{n} } \to N(0,\sigma^2),
\]
where $\mathbb{E}\, X_n \sim \mu n$ and $\mathbb{V}{\rm ar}\, X_n \sim \sigma^2 n$ 
with $\mu =  - {\rho'(1)}/{\rho(1)}$ and $\sigma^2 \ge 0$.
\end{proposition}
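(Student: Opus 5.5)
The plan is the standard quasi-power route: show that the probability generating function $\mathbb{E}\, x^{X_n}$ behaves like a large power of an analytic function, then apply Hwang's quasi-power theorem (equivalently, a direct Lévy continuity argument on characteristic functions).

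\textbf{Step 1: coefficient asymptotics uniform in $x$.} Fix $x$ in a small complex neighbourhood $U$ of $1$. By hypothesis, $f(z,x)$ continues analytically to a $\Delta$-domain at $\rho(x)$, and $\rho(x)$ is the only singularity on $|z|=|\rho(x)|$. By compactness (and shrinking $U$), the $\Delta$-domains can be taken with opening angle and radius independent of $x$.

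From the local expansion
\[
f(z,x)=a_0(x)+a_2(x)\(z-\rho(x)\)+a_3(x)\(z-\rho(x)\)^{3/2}+O\(\(z-\rho(x)\)^2\),
\]
the analytic terms contribute only exponentially smaller coefficients. Uniform singularity analysis (transfer theorems with $O$-constants uniform in the parameter, as in Flajolet--Sedgewick, or the uniformity lemma used in \cite{Drmota09}) then gives
\[
[z^n]f(z,x)=c(x)\,n^{-5/2}\rho(x)^{-n}\(1+O(n^{-1})\),
\]
uniformly for $x\in U$. Here $c(x)=a_3(x)(-\rho(x))^{3/2}/\Gamma(-3/2)$ is analytic, and $c(1)\neq0$ because $h(z_0)\neq0$.

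\textbf{Step 2: the quasi-power form.} Dividing by the value at $x=1$ gives
\[
\mathbb{E}\,x^{X_n}=\frac{c(x)}{c(1)}\(\frac{\rho(1)}{\rho(x)}\)^{n}\(1+O(n^{-1})\).
\]
This is a quasi-power with $B(x)=\rho(1)/\rho(x)$ and $A(x)=c(x)/c(1)$, both analytic at $1$, with error $O(1/n)$.

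\textbf{Step 3: apply the quasi-power theorem.} Hwang's theorem (see also \cite{Drmota09}) then yields the CLT and the moment asymptotics, with
\[
\mu=B'(1)/B(1)=-\rho'(1)/\rho(1),
\qquad
\sigma^2=\mu+\mu^2-\rho''(1)/\rho(1)\geq 0.
\]
The condition $\rho'(1)\neq0$ guarantees $\mu\neq0$, so the mean is genuinely linear. Variance positivity is not needed: the statement only asserts $\sigma^2\geq0$, and the degenerate case $\sigma^2=0$ is read as convergence to the point mass $N(0,0)$. For the conclusions $\mathbb{E}\,X_n\sim\mu n$ and $\mathbb{V}\mathrm{ar}\,X_n\sim\sigma^2 n$, one must not lose derivative information in the $O(1/n)$ error. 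Differentiating the analytic uniform expansion via Cauchy's formula on a small circle around $x=1$ handles this.

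\textbf{Main obstacle.} The delicate point is the uniformity in Step 1. We need the $\Delta$-continuation and the bound for the remainder term to hold uniformly over complex $x$ near $1$, and not merely for each fixed $x$.

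I would first try the following. Write $f=g+h(z-\rho(x))^{3/2}$ with $g,h$ jointly analytic on a polydisc. The singular part's coefficients are then explicit via the binomial series expansion of $h$ around $\rho(x)$. The remaining contour part, on the circle $|z|=|\rho(x)|(1+\eta)$ away from $\rho(x)$, is bounded by continuity and compactness, using the assumed analytic continuation to $|z|=|\rho(x)|$, $z\ne\rho(x)$.
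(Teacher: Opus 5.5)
Your proposal is correct and is essentially the argument the paper relies on: the paper gives no proof of its own but defers to \cite[\S 2]{Drmota09}, where uniform singularity analysis of the moving singularity gives the quasi-power form $\mathbb{E}\,x^{X_n}=A(x)B(x)^n(1+O(n^{-1}))$, and the quasi-power theorem then yields the CLT with $\mu=-\rho'(1)/\rho(1)$ and $\sigma^2=\mu+\mu^2-\rho''(1)/\rho(1)$. Yours is exactly the $3/2$-adaptation the paper alludes to (order $n^{-5/2}$, constant $1/\Gamma(-3/2)$), and you correctly identify the uniformity of the $\Delta$-continuation in $x$ as the only delicate point.
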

This can be extended a (possibly infinite) sequence of parameters, and gives a joint Central Limit Theorem. See \cite[\S 2]{Drmota09} for the proof in the case of a \emph{moving $1/2$-singularity}, which extends to the $3/2$ case.

\subsection{Maps and families of maps}
\label{subsec:defin-maps}

As mentioned above the intention of this paper is to obtain CLT's for several counting
parameters in various map classes by a singularity transfer. We collect here
some basis notions for planar maps.

A \emph{planar map} is a proper embedding of a connected planar finite multigraph into the two-dimensional sphere. A \emph{half-edge} $e$ is an edge (possibly a loop) and an orientation on this edge (each edge yields two half-edges). A \emph{corner} is the angular sector between two consecutive half-edges in anticlockwise order
around a vertex. We will consider rooted planar maps, \emph{i.e.}, planar maps where a half-edge (equivalently, a corner) is distinguished. In the following, \emph{map} is used as a shorthand for ``rooted planar map''. A \emph{face} of a map is a connected component of the complement (in the sphere) of the graph. The \emph{degree} of a face is the number of corners incident to it. The \emph{root face} is the face on the left side of the root when it is a half-edge (equivalently, it is the face incident to the root corner).

The \emph{size} $|\m|$ of a map $\m$ is its number of edges. A map is \emph{loopless} if it does not contain loops. A map is \emph{simple} if it is loopless and has no multiple edges. A \emph{bridge} is an edge whose removal disconnects the map. A map is \emph{bridgeless} if it does not contain bridges. A map is \emph{bipartite} if its vertices can be properly bicoloured in black and white. A map is said to be \emph{separable} if it is possible to partition its edge-set into two non-empty sets $E_1$ and $E_2$ such that there is exactly one vertex (called a \emph{cut vertex}) incident to both a member of $E_1$ and a member of $E_2$. The map is said to be \emph{$2$-connected} otherwise.

\begin{table}
\begin{tabular}{l | c || l | c}
Family & GF & Family & GF\\
\hline
All maps & $M_1(z)$ & Bipartite maps & $B_1(z)$ \\
Loopless/bridgeless maps & $M_2(z)$ & Bipartite simple maps & $B_2(z)$\\
Simple maps & $M_3(z)$ &Bipartite bridgeless maps & $B_3(z)$ \\
$2$-connected maps & $M_4(z)$ &Bipartite $2$-connected maps & $B_4(z)$ \\
$2$-connected simple maps & $M_5(z)$ & Bipartite $2$-connected simple maps & $B_5(z)$\\
\end{tabular}
\caption{Families of maps considered in this article, and their generating functions (GF).}
\label{tab:notation}
\end{table}

We use the same notation as Banderier \emph{et al.}, which we summarise in \cref{tab:notation}. In each case, the variable $z$ marks the size of the map. We do not consider that the vertex map (of size $0$) belongs to any of these families. These generating functions are related by the following equations \cite{airy}:
\begin{alignat}{2}
&M_1(z) = M_2\(\frac{z}{(1-z(1+M_1(z)))^2}\) + z(1+M_1(z))^2 \label{eq:M1-M2}\\
& && \hspace{-0.25\textwidth} B_1(z) = B_3\(\frac{z}{(1-z(1+B_1(z)))^2}\) + z(1+B_1(z))^2 \\
&M_2(z) = M_3(z(1+M_2(z))) && B_1(z) = B_2(z(1+B_1(z)))\\
&M_1(z) = M_4(z(1+M_1(z))^2) && B_1(z) = B_4(z(1+B_1(z))^2) \label{eq:M1-M4}\\
&M_4(z) = M_5(z(1+M_4(z))) && B_4(z) = B_5(z(1+B_4(z)))
\end{alignat}

It is well known that all these generating functions are not only algebraic
but have the same kind of singular behaviour at the radius of convergence 
(see, \emph{e.g.}, \cite{airy}).
\begin{lemma}
All the generating functions $F(z)$ of \cref{tab:notation} have a positive radius of convergence $\rho_F$ which is their unique dominant singularity. Moreover, each $F(z)$ has a $3/2$-singularity at $\rho_F$.
\end{lemma}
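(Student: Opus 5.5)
The plan is to start from an explicit $3/2$-singularity for one base family and push it through the composition equations (\cref{eq:M1-M2})--(\cref{eq:M1-M4}) and the remaining relations, using criticality of each scheme.

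For the base case we take general maps. Tutte's formula gives $M_1(z)$ explicitly: with $\rho_{M_1}=1/12$,
\[
1+M_1(z)=\frac{18z-1+(1-12z)^{3/2}}{54z^2},
\]
so $M_1$ has a $3/2$-singularity at $1/12$. Since $M_1$ is an explicit algebraic function with nonnegative coefficients, it is analytic on $|z|\le 1/12$ except at $z=1/12$. This also follows from aperiodicity, since $m_n>0$ for all $n\ge1$, together with the Pringsheim theorem. Bipartite maps are handled the same way, using $B_1(z)=\sum\frac{3^n 2\,(2n)!}{n!(n+2)!}z^n$ (i.e.\ $\frac{2\cdot 3^n}{(n+1)(n+2)}\binom{2n}{n}$). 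Its closed form also exhibits a $(1-8z)^{3/2}$ term at $\rho=1/8$.

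Next we invert the schemes. Take $M_1(z)=M_4(y)$ with $y=\psi(z)=z(1+M_1(z))^2$. From $M_1=\tilde g+\tilde h(1-12z)^{3/2}$ we get that $\psi$ also has a $3/2$-singularity with $\psi'(\rho)\ne0$, because the analytic part dominates. Hence $\psi$ is locally invertible at $\rho_{M_1}$ in the Puiseux sense: $z=\psi^{-1}(y)=a(y)+b(y)(\rho_{M_4}-y)^{3/2}$. Substituting gives $M_4(y)=M_1(\psi^{-1}(y))$. Expanding $(\rho-\psi^{-1}(y))^{3/2}$ and using that $(\rho_{M_4}-y)^{3/2}$ terms combine, we obtain an analytic part plus $c\,(\rho_{M_4}-y)^{3/2}$ plus higher half-integer powers. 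We then need to check that the coefficient $c$ does not vanish. This is the crux: the $3/2$-terms coming from $M_1$ and from $\psi^{-1}$ could cancel. We will compute $c$ explicitly from the known closed forms (e.g.\ $M_4(y)$ is known to have $\rho_{M_4}=4/27$ with an explicit parametrisation), or equivalently check the standard criterion that $M_1'(\rho)\neq (1+M_1(\rho))/(2\rho(\dots))$.

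The other families follow analogously. $M_2$ is obtained from (\cref{eq:M1-M2}) with $y=z/(1-z(1+M_1))^2$. $M_3$ comes from $M_2$, $M_5$ from $M_4$, and the bipartite ones likewise. Each family is algebraic with an explicit rational parametrisation, as in \cite{airy}, so the nonvanishing of the $3/2$-coefficient reduces to a finite explicit check per family. Criticality, meaning that $\psi(\rho)$ equals the inner radius, is verified from the same values.

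For uniqueness of the dominant singularity on the circle of convergence, we argue that each series has nonnegative coefficients that are eventually positive and aperiodic. For the bipartite classes we use the size variable in edges, where this holds. Combined with algebraicity and the triangle inequality $|F(z)|<F(|z|)$ for $z\ne|z|$, this rules out other singularities on $|z|=\rho_F$. This is provided the defining equation (the inverse relation) stays nondegenerate there, which follows since $\psi$ is injective on the disk, again by strict triangle inequality.

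I expect the main obstacle to be the nonvanishing of the $3/2$-coefficient in the composition step. I would settle it by direct computation from the explicit parametrisations.
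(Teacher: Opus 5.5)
\textbf{The gap: uniqueness of the dominant singularity.} This fails for every family other than $M_1$ and $B_1$. The principle you invoke is false. Nonnegative, eventually positive, aperiodic coefficients plus algebraicity do not exclude further singularities on $|z|=\rho_F$. For example, $\frac{2}{1-z}+\frac{1}{1+z}$ has coefficients $2+(-1)^n\ge 1$ and is still singular at $z=-1$. The strict inequality $|F(z)|<F(|z|)$ concerns values inside the disc; it says nothing about analytic continuation across the boundary.

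Nor can the composition scheme carry uniqueness over from $M_1$. The function $\psi(z)=z(1+M_1(z))^2$ has positive, aperiodic coefficients, so $|\psi(z)|<\psi(\rho_{M_1})=\rho_{M_4}$ for all $|z|\le \rho_{M_1}$ with $z\ne\rho_{M_1}$. Hence $\psi$ maps the closed disc of $M_1$ into a region that meets the circle $|y|=\rho_{M_4}$ only at $\rho_{M_4}$. The identity $M_1=M_4\circ\psi$ therefore says nothing about $M_4$ near any other point of that circle. Injectivity of $\psi$ on the disc does not help, and it is not a consequence of the triangle inequality anyway. The same obstruction applies to $M_2$, $M_3$, $M_5$ and the bipartite families.

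Uniqueness thus needs family-specific input. One option is the explicit algebraic equations or rational parametrisations collected in \cite{airy}, checking that the relevant branch is regular on the circle away from $\rho_F$. That is what the paper relies on: it gives no argument of its own and cites \cite{airy} for the whole lemma.

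\textbf{The local part.} This part of your plan is sound, and the step you call the crux is in fact automatic. Write $u=U(z,M)$, $v=V(z,M)$ with $M=g+h\,(z-\rho)^{3/2}$. Up to a nonzero factor, the coefficient of the $3/2$-power in $v$ as a function of $u$ equals $h(\rho)\,(U_zV_M-U_MV_z)$ evaluated at $(\rho,M(\rho))$. This is exactly the nondegeneracy condition of \cref{LeSing2}.
\begin{itemize}
\item For the block scheme, this quantity is $h(\rho)(1+M_1(\rho))^2$.
\item For \cref{eq:M1-M2}, it is $h(\rho)\bigl(1-\rho(1+M_1(\rho))\bigr)^{-2}$.
\end{itemize}
So no cancellation can occur. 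Criticality, however, still requires the known radii.

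\textbf{A factual slip.} The series you write for $B_1$ is Tutte's count of all maps, with growth rate $12$. Bipartite maps are counted by $\frac{3\cdot 2^{n-1}(2n)!}{n!\,(n+2)!}$, which is what gives $\rho_{B_1}=1/8$.
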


\subsection{Face and Pattern counting}

In the following, we use an additional parameter $x_\ell$ to track the 
number of non-root faces of degree $\ell$ in a map.
The corresponding generating functions are then denoted 
by $M_1(z,x_\ell)$ etc. Sometimes we will also count the root face of degree $\ell$.
Then the corresponding counting variable will be denoted by $\xfacer$.

If a face (of degree $\ell$) has the same number of vertices and edges we call it a \emph{\ellgon}.
Then corresponding counting variables are denoted by $\xellgon$ (resp. $\xellgonr$). Notice that, in $2$-connected maps, all faces of degree $\ell$ are \ellgons.

We denote by a \emph{pattern} $\p$ a given map and say that $\p$ \emph{occurs} in a map $\m$ if
$\p$ is a submap of $\m$ (see \cite{Yu19} or \cite{DHW25} for a detailed definition).
For example, a face of degree $\ell$ or a \ellgon{} is a pattern.
If we keep track of a pattern $\p$ then then corresponding counting variable is denoted by $x_\p$.

\begin{figure}
\begin{center}
\includegraphics[width=0.8\textwidth]{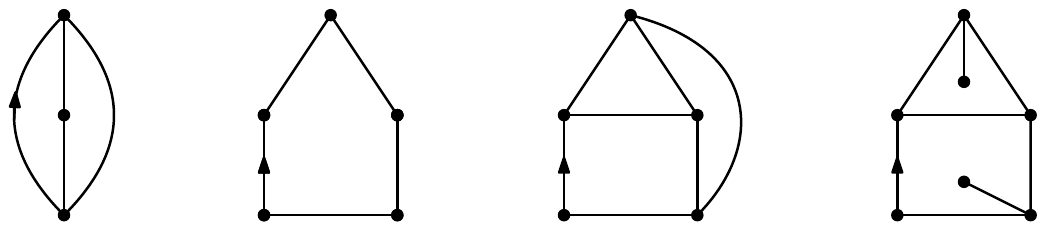}
\end{center}
\caption{Four \nnsi{} patterns.}
\label{fig:nnsi}
\end{figure}

A map is said to have \emph{simple boundary} if its root face is a \ellgon{} for some $\ell\geq 1$. A map $\p$ is said \emph{\nnsi{}} when, for any map $\m$, any interior face of a pattern occurrence of $\p$ in $\m$ cannot be the interior face of another pattern occurrence of $\p$. We also say that $\p$ \emph{cannot self-intersect}. This encompasses in particular the double-triangle pattern and \ellgons. Examples can be found in \cref{fig:nnsi}.

If we keep track of several non-root face counts $\xface$ with $\ell\in L$, where $L$ is a finite
subset of positive integers then we write 
$\x = (\xface)_{\ell \in L}$ for the corresponding vector of counting variables. Similarly, we write $\xr = (\xfacer)_{\ell\in L}$, $\xallgons = (\xellgon)_{\ell \in L}$ and $\xallgonsr = (\xellgonr)_{\ell \in L}$; as well as $\X = (\xface)_{\ell \geq 1}$.

Note that loopless maps of size $n$ and bridgeless maps of size $n$ are in bijection, so both families have the same generating function $M_2(z)$. However, for $\p$ a general pattern, the number of loopless maps of size $n$ with $k$ occurrences of $\p$ is not \emph{a priori} that of bridgeless maps of size $n$ with $k$ occurrences of $\p$. Therefore, when discussing the generating functions where patterns (or faces) are marked, we write $M_{2l}$ (resp. $M_{2b}$) for the generating function of loopless (resp. bridgeless) maps.

From the literate we already know the singular structure 
of some of the corresponding bi- (and multi-)variate generating functions.
Note that the case of $B_1(z,x_{\p})$ is not explicit in the literature,
but can be handled in the same way as $M_1(z,x_{\p})$.
\begin{proposition}
\label{prop:state-art-3-2-sg}
The following generating functions are algebraic and have a moving $3/2$-singularity:
\begin{itemize}
    \item $M_1(z, \xface)$ for $\ell \geq 1$ \cite[Lemma 4]{DP13};
    \item $M_3(z, \xface)$ for $\ell \geq 3$ \cite[\S5.4]{Yu19};
    \item $M_4(z, \xface)$ for $\ell \geq 2$ \cite[\S 6]{DP13};
    \item $M_1(z; \x)$ for any finite family $L$\cite[\S5]{CDK19};
    \item $B_1(z; \x)$ for any finite family $L$ \cite[\S4]{CDK19};
    \item $M_1(z, \xellgon)$ for $\ell \geq 2$ \cite[\S5.2]{Yu19};
    \item $M_1(z,x_{\p})$ for a \nnsi{} pattern $\p$ with simple boundary \cite[Proposition 2.1]{DHW25};
    \item $B_1(z,x_{\p})$ for a \nnsi{} pattern $\p$ with simple boundary, in the same way as \cite[Proposition 2.1]{DHW25}.
\end{itemize}
\end{proposition}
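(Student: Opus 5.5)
This proposition collects results from the literature, so the plan is to verify that each cited source gives exactly the claimed statement: algebraicity together with a moving $3/2$-singularity in the sense of \cref{eq32sing2}. The only item that needs new work is the last one, for $B_1(z,x_\p)$.

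For the cited items, the common mechanism is a catalytic equation. In each case one introduces an auxiliary variable $u$ marking the root-face degree, writes a Tutte-type functional equation (for $M_1$, $B_1$) or a system of such equations (for $M_3$, $M_4$, or several $\xface$), and solves it by the kernel method or quadratic method. The algebraic series that emerge are what give algebraicity.

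Concretely, in \cite{DP13} and \cite{CDK19} the solution is parametrised: $z=\phi(\tau,\x)$ and $M=\psi(\tau,\x)$, with $\phi,\psi$ analytic. The dominant singularity occurs where $\phi_\tau=0$ with $\phi_{\tau\tau}\neq 0$. This gives a square-root singularity for $\tau(z,\x)$, and after the usual cancellation of the $1/2$-term, a $3/2$-term for $M$. Since $\phi_{\tau\tau}\ne0$ at $\x=\mathbf 1$, the implicit function theorem yields an analytic critical point $\tau_0(\x)$, and hence an analytic $\rho(\x)=\phi(\tau_0(\x),\x)$. So for these items I would just check that the cited lemmas state the local expansion with analytic coefficients and nonvanishing $h$.

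For $M_1(z,x_\p)$, \cite[Proposition 2.1]{DHW25} proceeds differently. It decomposes a map with a marked non-self-intersecting pattern $\p$ with simple boundary into the part outside the pattern occurrences, with the interiors of occurrences substituted. Because $\p$ cannot self-intersect, occurrences are separated along their simple boundary faces. This gives a Tutte equation in which the generating function of faces of degree $k$ (the boundary length of $\p$) is modified by a term proportional to $(x_\p-1)$, and the $3/2$-singularity is then obtained as above.

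For $B_1(z,x_\p)$, I would redo the \cite{DHW25} argument with the bipartite Tutte equation, which has only even face degrees, in place of the general one. Two points need checking:
\begin{itemize}
\item the substitution of pattern interiors preserves bipartiteness; this holds because $\p$ is bipartite (otherwise the count is trivial) and the boundary is a simple even cycle;
\item the resulting one-catalytic-variable equation, solved by the Bousquet-Mélou--Jehanne theorem, yields a rational parametrisation whose critical point is nondegenerate at $x_\p=1$.
\end{itemize}
The main obstacle is this nondegeneracy, i.e.\ showing that $\phi_{\tau\tau}\neq 0$. I would verify it at $x_\p=1$ from the known explicit parametrisation of $B_1(z)$ (whose $3/2$-singularity is given by the earlier lemma) and then extend it to $x_\p$ near $1$ by continuity.
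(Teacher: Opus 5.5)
Your overall route is the paper's. The proposition is a digest of the literature, and for the only new item the paper merely asserts that $B_1(z,x_\p)$ ``can be handled in the same way as'' $M_1(z,x_\p)$ in \cite[Proposition 2.1]{DHW25}, which is what you propose.

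However, your sketch of that argument contains an error that misplaces the difficulty. An occurrence of $\p$ is a submap isomorphic to $\p$, so its boundary is a \emph{simple} $k$-cycle of $\m$. Collapsing the interiors of a set of occurrences therefore produces pure $k$-gons. Conversely, the interior of $\p$ can only be glued into a face bounded by a simple cycle if the result is to be an occurrence. Hence the $(x_\p-1)$-weight must be attached to pure $k$-gons, not to all faces of degree $k$ as you write. A face of degree $k$ can have a non-simple boundary, e.g.\ when a bridge hangs into it. Already for $\p$ a pure $\ell$-gon, which is a \nnsi{} pattern with simple boundary and no interior edges, your version would return the face-count series $B_1(z,\xface)$ of \cite[\S4]{CDK19} instead of $B_1(z,\xellgon)$.

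Consequently, what has to be transported to bipartite maps is exactly the mechanism that encodes the simple-boundary constraint. In your own decomposition, $B_1(z,x_\p)$ is obtained by substitution from a series counting bipartite maps by pure $k$-gons. That series is itself a special case of the item being proved, and it is not supplied by \cite{CDK19}, whose Tutte equation only sees face degrees. Replacing ``the general Tutte equation'' by ``the bipartite one'' does not address this, and neither of your two checks (preservation of bipartiteness, nondegeneracy) does.

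By contrast, the nondegeneracy you single out as the main obstacle is comparatively routine. At $x_\p=1$ the extra weight equals $1$, the problem reduces to $B_1(z)$, and $\phi_{\tau\tau}\neq 0$ is an open condition. Also, the Bousquet-Mélou--Jehanne theorem gives algebraicity, not a rational parametrisation; a local analytic parametrisation is all that is needed.
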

Clearly all mentioned parameters satisfy a CLT by a direct application of \cref{Pro0}.

The new properties that can be obtained with the help of the techniques of this paper
are summarized in the following theorem. The corresponding CLT's are 
listed in \cref{tab:results}.
\begin{theorem}
The following generating functions are algebraic and have a moving $3/2$-singularity:
$M_{2l}(z; \x)$, $M_{2l}(z; x_{\p})$, $M_{2b}(z; \x)$, $M_{2b}(z; x_{\p})$,
$M_{3}(z; \x)$, $M_{3}(z; \xallgons)$,
$M_4(z; \x)$, $M_4(z; \xpatt)$,
$M_5(z; \x)$,
$B_{2}(z; \x)$, $B_{2}(z; \xallgons)$,
$B_{3}(z; \x)$, $B_{3}(z; x_{\p})$,
$B_{4}(z; \x)$, $B_{4}(z; x_{\p})$,
$B_{5}(z; \x)$
where $L$ is any finite set of positive integers and $\p$ is a \nnsi{} pattern with simple boundary.
\end{theorem}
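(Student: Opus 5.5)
The plan has two layers. The first is combinatorial: for each composition relation \cref{eq:M1-M2}–\cref{eq:M1-M4} (and the remaining ones in the list), we refine the scheme so that the marked statistic is tracked. The second is analytic: a general transfer lemma for critical recursive composition schemes, stating that a moving $3/2$-singularity of the outer function is equivalent to one of the inner function. With both in place, each entry of the theorem follows by starting from a class listed in \cref{prop:state-art-3-2-sg} and walking along the chain of compositions, upward or downward, one relation at a time.

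\textbf{Combinatorial step.} For each substitution (e.g. a $2$-connected core with maps attached at corners), we determine how a non-root face of degree $\ell$ in the core, or a pattern occurrence, becomes a face or occurrence of the composed map. For pure $\ell$-gons and not-necessarily-self-intersecting patterns with simple boundary, an occurrence lies either entirely inside one substituted piece or is an interior face of the core whose corners all received the empty map. This produces relations of the type \cref{eqMBzxrel}: the core variable $x$ is replaced by
\[
\frac{(1+M)^\ell-1+x}{(1+M)^\ell}
\]
and the inserted pieces carry their own marking. General face counts $\x$ need more care. A face of degree $\ell$ in the composed map comes from a core face of smaller degree whose corners were filled by pieces contributing root-face degree, so the inserted pieces must carry the variables $\xr$ marking their root face. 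We therefore first establish moving singularities jointly in $(\x,\xr)$ for the known class. For $M_1$, we would derive this from the quadratic method or catalytic equation, as in \cite{CDK19}. The result is a finite system, algebraic in all variables, which also gives algebraicity of the unknown function.

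\textbf{Transfer lemma.} Suppose $G(z,x)=F(\phi(z,G,x),\psi(G,x))$ with analytic $\phi,\psi$, and suppose that at $x=1$ the composition is critical, i.e. $\phi(\rho_G,G(\rho_G))=\rho_F$. We show two directions.
\begin{enumerate}
\item If $F$ has a moving $3/2$-singularity at $\rho_F(x)$, we write
\[
F=g_F+h_F\,(y-\rho_F(x))^{3/2}
\]
and solve for $G$ by a singular implicit function argument near $(\rho_G,G(\rho_G),1)$. The equation $y(z,x)=\rho_F(x)$, solved in $z$ using $\partial_z y\neq0$, defines $\rho_G(x)$. Substituting gives
\[
G=a_0+a_2(z-\rho_G)+a_3(z-\rho_G)^{3/2}+\cdots,
\]
with $a_3\neq0$ because criticality keeps the $3/2$ term dominant and nondegenerate.
\item Conversely, if $G$ has a moving $3/2$-singularity, we invert: $z\mapsto y=\phi(z,G(z,x),x)$ is locally invertible away from the singular term. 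Then $F(y,u)=G(z(y),x)$, with $u=\psi(G,x)$ treated as an analytic reparametrisation in $x$ (an independent variable after a change of variables), and the $3/2$ expansion transfers.
\end{enumerate}
In both directions we must also verify that the dominant singularity is unique and that analytic continuation on $|z|=|\rho(x)|$ holds. This follows from aperiodicity and the triangle inequality applied to nonnegative coefficients, using the corresponding property of the known function.

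\textbf{Main obstacle.} The hard part is the inverse direction, extracting $F$ from $G$. Criticality means the inner map $z\mapsto \phi(z,G(z,x),x)$ itself has a $3/2$-singularity at the same point. We therefore cannot invert naively, and we have to show that the singular parts do not cancel, so that $F$ genuinely has exponent $3/2$ at a moving point rather than an analytic or a different-exponent singularity. My first attempt would be to work with the Puiseux variable $t=(\rho_G(x)-z)^{1/2}$. In this variable both $y$ and $F$ become analytic in $(t,x)$, with $y=\rho_F(x)-c(x)t^2+d(x)t^3+\cdots$. Eliminating $t$ then shows that $F$ is a function of $(\rho_F(x)-y)^{1/2}$ whose linear term vanishes and whose cubic coefficient is nonzero at $x=1$, by continuity from the known univariate $3/2$-singularity of $F(y)$. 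This nondegeneracy at $x=1$ is what makes the whole argument go through, and it is guaranteed by the univariate lemma.
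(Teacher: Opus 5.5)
\textbf{There is a genuine gap in your transfer lemma.}

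Your combinatorial layer matches the paper's \cref{sec:combinatorics} in substance. The analytic direction the theorem actually needs is your direction (2): one always starts from the known composite function and extracts the outer one. That direction has a genuine gap. You treat the second argument $u=\psi(G,x)$ as an analytic reparametrisation of $x$, but it is not one. It depends on $z$ through $G$, so it carries the same moving $3/2$-singularity as the first argument. In \cref{eqM1M4rel}, for instance, the second argument is $1-(1-x)(1+M_1(z,x))^{-\ell}$. For $x\neq 1$ this is singular at $z=\rho(x)$ exactly like $z(1+M_1(z,x))^2$.

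So you must invert a map $(z,x)\mapsto (y,u)$ \emph{both} of whose components are $3/2$-singular at the same moving point, and a priori their singular parts can combine and cancel. Your Puiseux paragraph does not resolve this. Eliminating $t$ against $y$ alone gives $F$ as a function of $(y,x)$ with singular point $\rho_F(x)$. To obtain $F(y,u)$ you would still have to substitute $x$ as a function of $(y,u)$, which is itself $3/2$-singular. (Your argument is fine for the steps in \cref{prop:M2-M3-nnsi}, where the marking variables pass through unchanged; it fails for the $M_1\to M_4$ and $M_1\to M_{2l},M_{2b}$ steps and their bipartite analogues.)

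This simultaneous inversion is exactly what the paper's \cref{LeSing2} does:
\begin{itemize}
\item invert $u=f_1(z,x)$ in $z$ by the parametrised \cref{LeSing1};
\item substitute into $v=f_2(z,x)$, which gives a $3/2$-term with coefficient $(h_2g_{1,z}-h_1g_{2,z})/g_{1,z}^{5/2}$;
\item pass from $x$ to $u$ by Weierstrass preparation and invert a second time;
\item read off $M_4(u,v)=M_1(z,x)=\sqrt{u/z}-1$.
\end{itemize}

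\textbf{How to repair your Puiseux idea.} Once repaired, your route is arguably more robust than the paper's.
\begin{itemize}
\item In the chart $z=\rho(x)-t^2$, all three of $G$, $y$ and $u$ are analytic in $(t,x)$ with no term linear in $t$.
\item $\partial_x\psi\neq0$ at the base point; for the argument above it equals $(1+M_1)^{-\ell}$ at $x=1$. So use $(t,u)$ as coordinates: $x=X(t,u)$ is analytic with $X_t(0,u)=0$.
\item Then $y=U_0(u)+U_2(u)t^2+U_3(u)t^3+\cdots$ and $F=b_0(u)+b_2(u)t^2+b_3(u)t^3+\cdots$.
\item Assume $U_2(u_0)\neq0$, which is your local invertibility of $z\mapsto y$. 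Eliminating $t$ at fixed $u$ gives $F(y,u)=\tilde g(y,u)+\tilde h(y,u)(U_0(u)-y)^{3/2}$.
\item The $(U_0(u)-y)^{1/2}$ term is absent because $F$ has no linear term in $t$. Moreover $\tilde h(U_0(u),u)$ is a nonzero multiple of $b_3U_2-b_2U_3$.
\item The second argument is identically $1$ at $x=1$, so the slice $u=1$ is the univariate $M_4(y)$. Hence $\tilde h\neq 0$ near $u=1$ follows from the univariate $3/2$-singularity, as you intended.
\end{itemize}

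This sidesteps the cross condition of \cref{LeSing2}. In \cref{eqM1M4rel} that condition degenerates at $x=1$: there $g_{2,z}$ and $h_2$ vanish, because the second substitution is constant.

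The same multi-argument issue recurs, with more singular substitutions, for face counts. There the inserted pieces must also carry their exact root-face degree, not only the markers $\xr$. The paper does this with the catalytic variable $t$; tracking at least all degrees below $\max L$ would also suffice.
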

For the proof we use the refined relations for the corresponding multi-variate
generating function from \cref{sec:combinatorics}, the singularity
transfer theorem \cref{LeSing2} plus some additional considerations
that are outlined in \cref{sec:applications}.

The patterns we consider in \cref{sec:combinatorics} are not required to have a simple boundary (but they must be \nnsi). However, since we are transferring a result about patterns with simple boundary, this condition holds in the theorem.

\section{Combinatorics for composition schemes}
\label{sec:combinatorics}
For all the families considered here, we first show how the equation can be obtained for the function $M(z,\xellgon)$. The proof extends readily to $M(z,\xallgons)$ and $M(z,\xpatt)$ for $\p$ a \nnsi{} pattern. We detail the proofs in the case of $2$-connected maps, then we simply give detailed sketches.

\subsection{\twoc{} maps}
\label{subsubsec:M1-M4-nnsi}
As a first case, we consider the decomposition scheme \cref{eq:M1-M4} relating general maps to $2$-connected maps. The decomposition procedure for the map $\m$ consists in finding the $2$-connected maximal submap (\emph{block}) $\b$ of $\m$ containing its root. Then, $\m$ can be obtained from $\b$ by replacing each of its edges by a new edge to which $2$ (possibly) empty maps are attached. Each of these added maps can again be decomposed in the same way.

Notice that a \ellgon{} is a $2$-connected map, so an occurrence of a \ellgon{} in $\m$ is in exactly one block of $\m$. Write $k\geq 0$ for the number of \ellgons{} in the block containing the root. Then, when ``adding back'' the rest of the map, two things can happen:
\begin{itemize}
    \item Either nothing is added in the inner corners of the \ellgon, in which case the \ellgon{} of $\b$ is a \ellgon{} of $\m$;
    \item Or, something is added in a corner, in which case this \ellgon{} of $\b$ is not an occurrence of a \ellgon{} in $\m$.
\end{itemize}
Moreover, a \ellgon{} of $\m$ is either in $\b$ or in another of its block, \emph{i.e.}, a the root block of another of the maps appearing along the decomposition.

Notice that the total number of edges in \ellgons{} in $\b$ might be less than $\ell$ times the number of \ellgons{} because the same edge can belong to several \ellgons{}. Consider the (possibly disconnected) submap of $\b$ obtained by keeping only the \ellgons{}: if $\b$ has $k$ \ellgons{} and $p$ edges in those \ellgons{} then, by double counting, its number of exterior edges is
\begin{equation}
\label{eq:interior-edges-l-gons}
2p - \ell k.
\end{equation}

Let ${M_4}(y,\xellgon,t)$ be the generating function where $t$ counts the total number of edges in \ellgons{}. Notice that ${M_4}(y,\xellgon) = {M_4}(y,\xellgon,1)$. Then, in $M_1(z,\xellgon)$, a map $\m$ with $k$ occurrences of \ellgons{} in its root block $\b$, which have in total $p$ edges contributes
\[\(\frac{[\xellgon^k t^p]{M_4}(y,\xellgon,t)}{y^p}\)\circ \(z\(1+{M_1}(z,\xellgon)\)^2\) z^p \(1+{M_1}(z,\xellgon)\)^{2p-\ell k} \(\(1+{M_1}(z,\xellgon)\)^\ell-1 + \xellgon\)^k\]
where
\begin{itemize}
    \item $\(\frac{[\xellgon^k t^p]{M_4}(y,\xellgon,t)}{y^p}\)\circ \(z\(1+{M_1}(z,\xellgon)\)^2\)$ corresponds to the edges of $\b$ which are not incident to a \ellgon, each of them being endowed with $2$ (possibly) empty maps;
    \item $z^p$ the edges of $\b$ which are in \ellgons{};
    \item $\(1+{M_1}(z,\xellgon)\)^{2p-\ell k}$ corresponds to the outer corners of the \ellgons{}, which can be endowed with any map;
    \item $\(\(1+{M_1}(z,\xellgon)\)^\ell-1 + \xellgon\)^k$ corresponding to the inner corners of the \ellgons{}: if all $\ell$ inner corners of a \ellgon{} of $\b$ are endowed with nothing, then there is a \ellgon{} in $\m$.
\end{itemize}
It remains to sum this term over all possible couples $(k,p)$ to get the following result. This leads to
\begin{equation}
\label{eqM1M4rel}
{M_1}(z,\xellgon) = {M_4}\(z\(1+{M_1}(z,\xellgon)\)^2,\frac{\(1+{M_1}(z,\xellgon)\)^\ell-1 + \xellgon}{\(1+{M_1}(z,\xellgon)\)^\ell}\)
\end{equation}
for all $\ell \ge 1$.

The method used before works when tracking $\ell_1$-gons, \dots, $\ell_n$-gons. Indeed, $\ell_i$-gons do not interact with $\ell_j$-gons for $i\neq j$, and the inner corners of each are disjoint. One simply needs to replace \cref{eq:interior-edges-l-gons} by the following, writing $k_i$ for the number occurrences of $\ell_i$-gons:
$2\ell - \sum_{i=1}^n k_i \ell_i$.

Finally consider a \nnsi{} $2$-connected pattern $\p$. Write $n_\p$ for the size of $\p$, and $e_\p$ for the size of its root face, and $i_\p = n_\p - e_\p$ for its number of interior edges. The number of interior ``sides of edges'' in $\p$ is then $\ell_\p := 2 i_\p + e_\p = 2n_\p - e_\p$. Then, for a (possibly disconnected) map with $k$ occurrences of $\p$ and $p$ edges in those occurrences, it holds, by double counting, that its number of exterior edges is $2p - k\ell_\p$. 
(Notice that for a \ellgon, it holds that $\ell_\p =\ell$ so this is consistent with \cref{eq:interior-edges-l-gons}.) Then, by the same proof as above works.

Summing up, we get the following properties.
\begin{proposition}
\label{prop:M1-M4-joint-ellgons}
For any finite subset $L$ of positive integers it holds that:
\[
{M_1}(z;\xallgons) = {M_4}\(z\(1+{M_1}(z,\xallgons)\)^2;
\left(\frac{\(1+{M_1}(z,\xallgons)\)^\ell-1 + \xellgon}{\(1+{M_1}(z,\xallgons)\)^\ell} \right)_{\ell\in L} \).
\]
Furthermore, for $\p$ a \nnsi{} $2$-connected pattern, it holds that:
\[{
M_1}(z,\xpatt) = {M_4}\(z\(1+{M_1}(z,\xpatt)\)^2,\frac{\(1+{M_1}(z,\xpatt)\)^{\ell_\p}-1 + \xpatt}{\(1+{M_1}(z,\xpatt)\)^{\ell_\p}}\).
\]
\end{proposition}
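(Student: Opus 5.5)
The proof is a refinement of Tutte's block decomposition, with a decoration argument on the root block. I carry it out first for a single \ellgon{} count, then extend it to several lengths and to patterns.

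\textbf{Step 1: decompose at the root block.} Given a map $\m$ counted by $M_1$, take its root block $\b$, which is $2$-connected and counted by $M_4$. As in Tutte's derivation of \cref{eq:M1-M4}, $\m$ is recovered from $\b$ by attaching a possibly empty map from $1+M_1$ at each of the $2|\b|$ corners of $\b$. Equivalently, each edge of $\b$ carries two maps, which gives the substitution $y=z(1+M_1)^2$.

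\textbf{Step 2: track which \ellgons{} survive.} Two facts drive the refinement.

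First, a \ellgon{} is $2$-connected, so each occurrence in $\m$ lies in exactly one block. Occurrences in non-root blocks are counted recursively through the attached copies of $M_1(z,\xellgon)$.

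Second, a \ellgon{} $f$ of $\b$ remains a \ellgon{} of $\m$ if and only if all $\ell$ corners interior to $f$ receive the empty map. If anything is attached inside, $f$ is no longer a face of $\m$.

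To avoid a separate count of edges in \ellgons{}, I assign corners directly. Each interior corner of a \ellgon{} face belongs to exactly one such face, since corners are sectors in distinct faces. So the $2|\b|$ corners of $\b$ split into $\ell k$ corners lying inside the $k$ \ellgons{} of $\b$ and $2|\b|-\ell k$ remaining corners. This corner count is what \cref{eq:interior-edges-l-gons} expresses.

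\textbf{Step 3: compute the weights.} Each remaining corner gets weight $1+M_1$. Each \ellgon{} gets weight $(1+M_1)^\ell-1+\xellgon$: all choices for its $\ell$ corners, except that the all-empty choice is weighted by $\xellgon$ instead of $1$.

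Summing over $\b$ with $|\b|=n$ and $k$ \ellgons{}, the contribution is
\[
z^n(1+M_1)^{2n}\Big(\tfrac{(1+M_1)^\ell-1+\xellgon}{(1+M_1)^\ell}\Big)^k .
\]
This is exactly $M_4$ evaluated at $y=z(1+M_1)^2$ and $\xellgon\mapsto \frac{(1+M_1)^\ell-1+\xellgon}{(1+M_1)^\ell}$, with $M_4$ marking the $k$ \ellgons{} of $\b$. In a $2$-connected map every face of degree $\ell$ is a \ellgon, so this is the face-count variable of $M_4$.

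\textbf{Step 4: several lengths.} The joint version for $\xallgons$ follows because, for different $\ell\in L$, the sets of interior corners of distinct faces are disjoint. The weights therefore factor, giving one substituted coordinate for each $\ell\in L$.

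\textbf{Step 5: patterns.} For a \nnsi{} $2$-connected pattern $\p$, an occurrence again lies in a single block. It survives in $\m$ exactly when the $\ell_\p=2n_\p-e_\p$ interior corner-sides of the occurrence receive nothing. The nnsi hypothesis makes these interior corner sets disjoint across distinct occurrences in $\b$, and this is needed for the product factorisation. The same weighting then gives the second identity with $\ell$ replaced by $\ell_\p$.

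\textbf{Main obstacle.} The delicate point is justifying that surviving occurrences correspond bijectively to (occurrence in a block, all interior corners empty). Two things must be checked:
\begin{itemize}
\item no new occurrence of $\p$ is created that straddles a cut vertex;
\item attaching maps only at exterior corners does not destroy an occurrence.
\end{itemize}
Both follow from $2$-connectedness of $\p$, since any submap isomorphic to $\p$ lies within one block, together with the definition of occurrence as a submap whose interior faces are faces of $\m$.
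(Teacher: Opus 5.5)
Your proposal is correct and follows essentially the same route as the paper. Both arguments decompose at the root block $\b$, weight each \ellgon{} (or \nnsi{} occurrence) of $\b$ by $(1+M_1)^\ell-1+\xellgon$ according to whether all its interior corners stay empty, and rely on disjointness of interior corners to factor the weights. The only difference is bookkeeping. The paper introduces an auxiliary variable $t$ marking the number $p$ of edges of $\b$ lying on \ellgons{}: the remaining edges get weight $y=z(1+M_1)^2$, and the $2p-\ell k$ outer sides of the \ellgon{} edges get weight $1+M_1$ (this, rather than your $2|\b|-\ell k$, is what \cref{eq:interior-edges-l-gons} counts). Your direct partition of all $2|\b|$ corners by faces yields the same product $z^n(1+M_1)^{2n-\ell k}\bigl((1+M_1)^\ell-1+\xellgon\bigr)^k$ without the extra variable.
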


\subsection{Loopless or bridgeless maps}
\label{subsubsec:M1-M2-nnsi}
Consider the decomposition scheme \cref{eq:M1-M2} relating general maps and loopless maps. In this case, two possible phenomena can happen
\begin{itemize}
    \item The root edge is a loop, and in each of its corner a possibly empty map is attached. This is called the \emph{coreless} case;
    \item The root edge is not a loop. Then consider the \emph{loopless core} $\b$, \emph{i.e.}, the maximal loopless submap containing the root. In each of its corners, a (possibly empty) sequence of loops is attached, each of this loop containing a (possibly empty) general map.
\end{itemize}

A central fact here is that \ellgons{} cannot have loops (except for the $1$-gon, which we do not consider in this case), so an occurrence of a \ellgon{} in $\m$ is in exactly one block of $\m$, and, whenever the composition scheme adds a loop in a face, this face cannot be a \ellgon.
\begin{itemize}
    \item The coreless case simply contributes $z M_1(z,\xellgon)^2$ (because the two root faces of the $M_1$ maps are changed into a face containing a loop, so cannot contribute to the number of \ellgons{}).
    \item When there is a loopless core, all its non-root face \ellgons{} stay that way if and only if each corner of the \ellgon{} is left empty (an empty sequence is attached).
\end{itemize}

As before, this works as well when counting simultaneously simple $\ell_1$-gons, \dots, simple $\ell_n$-gons and when counting a \nnsi{} loopless pattern $\p$ with $\ell_\p$ interior sides of edges.

\begin{proposition}
\label{prop:M1-M2-nnsi}
For any finite subset $L$ of positive integers it holds that:
\[
M_1(z;\xallgons) = M_{2l}\(\frac{z}{\(1-z(1+M_1(z,\xallgons))\)^2};
\left(\frac{\({1-z(1+M_1(z,\xallgons))}\)^{-\ell}-1 + \xellgon}
{\({1-z(1+M_1(z,\xallgons))}\)^{-\ell}}\right)_{\ell\in L}\) + z \(1+M_1(z,\xallgons)\)^2.\]
For $\p$ a \nnsi{} loopless pattern, it holds that:
\[M_1(z,\xpatt) = M_{2l}\(\frac{z}{\(1-z(1+M_1(z,\xpatt))\)^2},\frac{\({1-z(1+M_1(z,\xpatt))}\)^{-\ell_\p}-1 + \xpatt}{\({1-z(1+M_1(z,\xpatt))}\)^{-\ell_\p}}\) + z \(1+M_1(z,\xpatt)\)^2.\]
\end{proposition}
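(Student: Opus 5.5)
We prove the statement by refining the combinatorial decomposition behind \cref{eq:M1-M2}, following the scheme of \cref{subsubsec:M1-M4-nnsi}, and then summing over the loopless core. We treat the single variable $\xellgon$ first; the vector case $\xallgons$ and the pattern case $\xpatt$ are handled by the same bookkeeping at the end.

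\textbf{The coreless case.} A map $\m$ counted by $M_1$ either has a loop as root edge or not. If the root edge is a loop, then $\m$ consists of the loop together with two possibly empty maps, one inside and one outside the loop. This contributes $z\(1+M_1(z,\xellgon)\)^2$.

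We check that this contribution counts \ellgons{} correctly. Every \ellgon{} of $\m$ lies inside one of the two attached maps and is a non-root face of it, since the root faces of the attached maps are merged with a face bounded by the loop. Conversely, any face touching the loop contains a loop on its boundary, so for $\ell\ge 2$ it cannot be a \ellgon{}. Hence the variable $\xellgon$ is inherited correctly.

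\textbf{The loopless-core case.} Otherwise, let $\b$ be the loopless core of $\m$. Each of the $2|\b|$ corners of $\b$ receives a possibly empty sequence of loops, and each loop carries a map inside. The generating function for one such loop is $z(1+M_1)$, so a single corner contributes $\(1-z(1+M_1)\)^{-1}$. Attaching this factor to the two corners associated with each edge gives the substitution $y=z/(1-z(1+M_1))^2$.

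Next we track the \ellgons{}. A \ellgon{} of $\m$ either lies inside some map carried by a loop, where it is counted recursively, or is a face of $\b$. In the second case it remains a \ellgon{} in $\m$ exactly when all $\ell$ of its inner corners receive the empty sequence. Faces touching an added loop are never \ellgons{}.

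To count the faces of $\b$ correctly, introduce $M_{2l}(y,\xellgon,t)$, with $t$ marking the edges lying in \ellgons{}. For a core with $k$ \ellgons{} spanning $p$ edges, double counting as in \cref{eq:interior-edges-l-gons} shows that there are $2p-\ell k$ exterior corners and $\ell k$ inner corners. The exterior corners each receive the factor $(1-z(1+M_1))^{-1}$. The $\ell$ inner corners of each \ellgon{} together receive
\[\(1-z(1+M_1)\)^{-\ell}-1+\xellgon,\]
where the $-1+\xellgon$ accounts for the case in which all inner corners stay empty and the face survives as a \ellgon{}.

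Summing over $(k,p)$ recombines these terms into the single substitution stated in \cref{prop:M1-M2-nnsi}, with second argument
\[\frac{(1-z(1+M_1))^{-\ell}-1+\xellgon}{(1-z(1+M_1))^{-\ell}}.\]

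\textbf{Several face degrees and patterns.} For a finite set $L$, faces of distinct degrees are distinct faces, so their inner corners are disjoint. The exterior corner count becomes $2p-\sum_\ell \ell k_\ell$, and each degree receives its own substitution factor.

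For a \nnsi{} loopless pattern $\p$, replace $\ell$ by $\ell_\p$, the number of interior sides of edges. We must check three facts:
\begin{itemize}
\item Occurrences in $\m$ lie entirely in $\b$ or entirely inside one attached map, because $\p$ is loopless and an occurrence cannot straddle an added loop.
\item An occurrence in $\b$ survives in $\m$ if and only if its $\ell_\p$ interior corners all stay empty.
\item The non-self-intersection property makes the interior corners of distinct occurrences disjoint, which justifies the product form and the double-counting identity $2p-k\ell_\p$.
\end{itemize}

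\textbf{Main obstacle.} The delicate step is the last check: that attaching loops can neither create new occurrences in $\b$ nor destroy occurrences other than through their interior corners. In particular one must rule out an occurrence using boundary corners that receive loops. We will argue this from the definition of a pattern occurrence as a submap whose interior faces coincide with faces of $\m$. Attaching a loop at an exterior corner of the occurrence does not change its interior faces, so the occurrence persists.
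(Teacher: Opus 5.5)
Your proposal is essentially the paper's own argument: the coreless case contributes $z(1+M_1)^2$, and in the loopless core each of the $2|\b|$ corners receives a sequence of loops carrying arbitrary maps, giving $y=z/(1-z(1+M_1))^2$; an \ellgon{} or an occurrence of $\p$ in the core survives exactly when its $\ell$ (resp.\ $\ell_\p$) inner corners all receive the empty sequence, and everything is summed with the same $(k,p)$ bookkeeping as in the $2$-connected case. Two points need tidying: $\ell=1$ must be excluded from $L$ (an empty loop is itself a pure $1$-gon, which is why the paper sets the $1$-gon aside), and your claim that an occurrence cannot straddle an added loop does not follow from looplessness of $\p$ (two triangles glued at a vertex are loopless and can straddle one) but from non-self-intersection, since for $\p$ with at least one interior face a straddling occurrence has a cut vertex at the base of the loop, where duplicating one side yields two occurrences with a common interior face.
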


\begin{remark}
\label{rem:M1-M2b-nnsi}
\Cref{prop:M1-M2-nnsi} and the discussion above it still hold when replacing ``loop'' by ``bridge'', ``loopless'' by ``bridgeless'' and $M_{2l}$ by $M_{2c}$, with the exception that $1$-gon can be counted in the decomposition into bridgeless components (and the equation is exactly what one would expect).
\end{remark}

\subsection{(\twoc{}) simple maps}
\label{subsubsec:M2-M3-nnsi}
The simple core of a loopless map is obtained by collapsing maximal multi-edges into a simple edge. In the other direction, a loopless map is obtained from its simple core by attaching a (potentially empty) loopless map to each edge.

When performing this attachment, all the faces of the simple core are preserved. All the interior faces of the attached maps are preserved. For an attached map, the two vertices of the root edge are now attached by an additional edge, creating a new interior face with same degree as the root face of the attached map and the same number of distinct vertices. Therefore, the first equation of \cref{prop:M2-M3-nnsi} holds. (Note that loopless and simple maps do not contain $1$-gons (loops).)

As before, this works as well when counting simultaneously simple $\ell_1$-gons, \dots, simple $\ell_n$-gons. However, contrary to what happens in \cref{subsubsec:M1-M2-nnsi,subsubsec:M1-M4-nnsi}, this does not extend to (\nnsi{}) simple patterns with several faces because a simple pattern can span two simple blocks while staying simple.

\begin{proposition}
\label{prop:M2-M3-nnsi}
For any finite subset $L$ of positive integers, it holds that:
\[M_{2l}(z;\xallgonsr) = M_3(z (1+M_{2l}(z;\xallgonsr));\xallgonsr).\]
The same relation holds when replacing $M_{2l}$ by $M_4$ and $M_3$ by $M_5$.
\end{proposition}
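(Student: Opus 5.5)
We prove \cref{prop:M2-M3-nnsi} with a combinatorial decomposition into simple cores. The generating function identity then follows by a substitution argument, and the only delicate point is the bookkeeping of the marking variables. We work with $M_{2l}(z;\xallgonsr)$, where every \ellgon{} is counted, the root face included. The key observation is that in a loopless map no $\ell$-gon with $\ell\ge 2$ can survive the multi-edge collapse unchanged unless it is a face of the simple core or a face of one of the attached maps. Put another way, every face of the loopless map $\m$ arises in exactly one of two ways: either it is a face of the simple core $\b$ (collapsing parallel classes keeps the faces of the core), or it is a face lying strictly inside the bundle replacing one edge of $\b$.

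First we make the bijection precise. Let $\m$ be a loopless map and let $\b$ be its simple core. Each edge $e$ of $\b$ corresponds to a maximal parallel class of edges of $\m$. The region between the two extreme edges of this class, together with its contents, forms a loopless map $\m_e$; it is empty exactly when the class consists of a single edge. The root of $\m_e$ is induced by the orientation of $e$. Conversely, substituting into every edge of $\b$ either nothing or a loopless map $\m_e$ recovers $\m$. Edge counts are additive, with one extra edge per substituted map, and this is what produces the argument $z(1+M_{2l})$. One must check that the core of the substituted map is $\b$ itself, which holds because $\b$ is simple. The root edge of $\m$ is the outer edge of the bundle of the root edge of $\b$.

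Next come the faces. The faces of $\b$ reappear in $\m$ with the same degree and the same vertex set, since the outer sides of each bundle play the role of $e$. Hence every \ellgon{} of $\b$, the root face included, is a \ellgon{} of $\m$, and conversely. The interior faces of each $\m_e$ are faces of $\m$. The root face of $\m_e$ is closed by the new parallel edge and becomes an interior face of $\m$ with the same degree and the same vertices. So the full face-degree and \ellgon{} statistics of $\m_e$, including its root face, are carried over. This is exactly why the rooted variables $\xallgonsr$ appear as the second argument of both sides, unchanged, with no correction term. Summing over cores and over the independent choices for each edge gives the functional equation.

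For $M_4$ and $M_5$ the same argument applies. A $2$-connected loopless map has a $2$-connected simple core, and substituting $2$-connected maps (or nothing) into the edges preserves $2$-connectivity; as before, substituting into a simple core produces no additional collapsible edges. The main obstacle is making the face correspondence rigorous in degenerate cases. The first is when $\b$ is a single edge. The second is when several consecutive parallel edges create $2$-gons, which are faces of $\m_e$ itself and must not be double counted. Both are handled by taking $\m_e$ to be everything between the extreme edges of the class, so that every intermediate $2$-gon is a face of $\m_e$, its root face included.
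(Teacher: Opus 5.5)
Your proof is correct and follows the paper's argument. You decompose a loopless (resp.\ $2$-connected) map into its simple core, with a possibly empty loopless (resp.\ $2$-connected) map attached to each edge. You then observe that the faces of the core and the interior faces of the attached maps are preserved. Finally, the root face of each attached map is closed by the extra parallel edge into an interior face with the same degree and vertex set, which is exactly why the rooted variables pass through unchanged.

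Your version is more explicit than the paper's sketch, for instance in recovering $\m_e$ from the extreme edges of the parallel bundle and in checking that $2$-connectivity is preserved. Like the paper, it implicitly treats a degree-$\ell$ face with $\ell$ distinct vertices as a pure $\ell$-gon, so the face of the single-edge map counts as a pure $2$-gon.
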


\subsection{Bipartite maps}
Bipartite maps are necessarily loopless, but can have bridges. The discussion for bridgeless patterns in \cref{subsubsec:M1-M2-nnsi} extends to the case where everything considered is bipartite. All the other discussions also extend in the same way.

\begin{proposition}
When considering only bipartite patterns:
\begin{itemize}
    \item \Cref{prop:M1-M4-joint-ellgons} hold when replacing $M_1$ by $B_1$ and $M_4$ by $B_4$;
    \item \Cref{rem:M1-M2b-nnsi} holds when replacing $M_1$ by $B_1$ and $M_{2b}$ by $B_3$;
    \item The first part of \cref{prop:M2-M3-nnsi} holds when replacing $M_{2l}$ by $B_1$ and $M_3$ by $B_2$;
    \item The second part of \cref{prop:M2-M3-nnsi} holds when replacing $M_4$ by $B_4$ and $M_5$ by $B_5$.
\end{itemize}
\end{proposition}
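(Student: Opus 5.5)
The plan is to re-run, item by item, the combinatorial arguments of \cref{subsubsec:M1-M4-nnsi,subsubsec:M1-M2-nnsi,subsubsec:M2-M3-nnsi}, restricted to bipartite maps and bipartite patterns. For each decomposition I will show two things: it is a bijection that maps bipartite maps exactly onto bipartite pieces, and the local face bookkeeping (which faces of the core survive as \ellgons{} or occurrences of $\p$) does not depend on bipartiteness. Once both hold, the generating function identities follow from the same term-by-term summation over $(k,p)$ as before, with every class replaced by its bipartite counterpart.

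The first step, which I expect to be the main obstacle, is the structural one. I need a characterisation of bipartiteness that is compatible with each decomposition. I will use that a map is bipartite if and only if every cycle has even length, and that every cycle of a map lies inside a single $2$-connected block. Hence a map is bipartite if and only if all its blocks are bipartite. This gives the first item directly: in the block decomposition of \cref{eq:M1-M4}, $\m$ is bipartite if and only if its root block $\b$ and all the maps attached at the corners are bipartite. The gluing is at single vertices, so the vertex bicolouring can be adapted by swapping colours in each attached piece.

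For the bridge decomposition (second item), bipartite maps are automatically loopless. Every cycle avoids bridges, so $\m$ is bipartite if and only if its bridgeless core and all the pieces hanging off via bridges are bipartite. The coreless case, a root bridge with two bipartite maps, stays bipartite. For the third and fourth items, I note that collapsing a multi-edge to a simple edge, and conversely substituting a loopless map for an edge, preserve bipartiteness, by the following argument. Replacing an edge $uv$ by a map whose root edge joins $u$ and $v$ yields a bipartite map exactly when the inserted map is bipartite, because then $u$ and $v$ receive different colours in the inserted map, as they must. This is precisely why $B_1$ (all bipartite maps, necessarily loopless) plays the role of $M_{2l}$ in the first part of \cref{prop:M2-M3-nnsi}.

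The second step is the face bookkeeping, and it can be copied verbatim. A \ellgon{}, or an occurrence of a bipartite \nnsi{} pattern $\p$, is $2$-connected (respectively bridgeless or loopless, as required). So it still lies in a single piece of the decomposition, and it survives in $\m$ exactly when all its $\ell$ (respectively $\ell_\p$) inner corners receive empty attachments. The double-counting identity \cref{eq:interior-edges-l-gons}, together with its pattern version $2p-k\ell_\p$, is purely local and unchanged. In the bipartite setting only even $\ell$ give nonzero counts, but the formulas remain valid for all $\ell\in L$. Substituting the bipartite generating functions into the same sums then yields each of the four claimed identities.
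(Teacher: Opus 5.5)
Your proposal is correct and follows the paper's route: the paper only asserts that the block, bridge and simple-core decompositions, together with their face bookkeeping, extend verbatim to bipartite maps. You supply the justification it omits, namely that each decomposition sends bipartite maps exactly onto tuples of bipartite pieces (cycles lie inside blocks and avoid bridges, and gluing along an edge $uv$ is compatible with colourings since $u$ and $v$ are adjacent in every piece). One small imprecision: the rule ``an $\ell$-gon survives iff its inner corners receive empty attachments'' describes only the first two items, while the last two rely on the simple-core bookkeeping (core faces are preserved and root faces of attached maps become interior faces, hence the root-face variables); that bookkeeping is equally independent of bipartiteness, so nothing breaks.
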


\subsection{Face counts}
As explained before, in $2$-connected maps, all faces of degree $\ell$ are \ellgons, which we have previously treated. We now look at face counts in the other cases.

We start with the loopless case. Let $t$ mark the degree of the root face and consider $M_1(z,t;\X)$. Once again we distinguish according to the existence of a loopless core. The coreless case contributes
\[zt\(x_1 + \sum_{\ell=1}^\infty\([t^\ell] M_1\(z,t;\X\) \cdot \xface\)\) (1 + M_1(z,t;\X)).\]
On the other hand, when there is a core, the degrees of its faces are increased by the sum of sizes of sequences put in the corner of this face. Consider the generating series for sequences of maps, each element in a loop, with the loops lay in the root face:
\[S_l(z,t;\X) = \frac{1}{1-zt\(x_1 + \sum_{\ell=1}^\infty\([t^\ell] M_1\(z,t;\X\) \cdot \xface\)\)}.\]
Then, this case contributes:
\[M_{2l}\(z, S_l(z,t;\X); \xface \leftarrow \sum_{k=0}^{\infty} [t^k] (S(z,t;\X)^\ell ) x_{\ell+k}\).\]

In bridgeless maps, the same reasoning holds, where the coreless case contributes
\[z t^2 \(1+M_1(z,t;\X)\)^2\]
and, for the case with a core, the generating function for the sequences reads
\[S_b(z,t;\X) = \frac{1}{1-zt^2 \(1+M_1(z,t;\X)\)}.\]

\begin{proposition}
\label{prop:M1-M3-faces}
It holds that:
\begin{align*}
M_1(z,t;\X) &= M_{2l}\(z, S_l(z,t;\X); \xface \leftarrow \sum_{k=0}^{\infty} [t^k] (S_l(z,t;\X)^\ell ) x_{\ell+k}\)\\
&\hspace{1cm}+ zt\(x_1 + \sum_{\ell=1}^\infty\([t^\ell] M_1\(z,t;\X\) \cdot \xface\)\) (1 + M_1(z,t;\X));\\
M_1(z,t;\X) &= M_{2b}\(z, S_b(z,t;\X); \xface \leftarrow \sum_{k=0}^{\infty} [t^k] (S_b(z,t;\X)^\ell ) x_{\ell+k}\) + z t^2 \(1+M_1(z,t;\X)\)^2.
\end{align*}
\end{proposition}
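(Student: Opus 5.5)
The plan is to prove both identities bijectively, by refining the core decompositions of \cref{subsubsec:M1-M2-nnsi} so that they track the degree of every face rather than only the faces that are \ellgons. Throughout, the variable $t$ marks the degree of the root face and $\xface$ marks the non-root faces of degree $\ell$. I would first make the loopless decomposition precise. Every map $\m$ either has a loop as root edge (the coreless case) or has a unique maximal loopless submap $\b$ containing the root. In the second case, $\m$ is recovered uniquely from $\b$ by attaching, in each corner of $\b$, a possibly empty sequence of loops, each loop enclosing a possibly empty map. The first thing to check is that this correspondence is a bijection. The argument is the same one that underlies \cref{eq:M1-M2}, so I would only recall it.

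Next, I would compute how face degrees change, in two steps.

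\emph{Coreless case.} The root loop separates the sphere into two regions. On one side lies a map $\m'$ (or nothing), and the face on that side becomes a new non-root face. Its degree is $1$ if nothing is inside, and otherwise it is determined by the root-face degree $\ell$ of $\m'$. This is where the factor $x_1 + \sum_{\ell}[t^\ell]M_1(z,t;\X)\,\xface$ comes from: we extract the root-face coefficient of $M_1$ and replace it by the marker of a non-root face. The other side contributes the root face, whose degree increases by one corner, giving the factor $t$, and carries a possibly empty map, giving the factor $1+M_1$.

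\emph{Core case.} Consider a corner of a face of $\b$ into which a sequence of $j$ loops is inserted. Each loop adds one corner to that face, so it contributes a factor $t$ to a degree counter for that face. Each loop also creates an enclosed face, marked exactly as in the coreless case. Hence a single corner contributes $S_l(z,t;\X)$, where $t$ now counts the increase in degree of the surrounding face. A non-root face of $\b$ of degree $\ell$ has $\ell$ corners, so it receives the factor $S_l^\ell$. Its final degree is $\ell+k$ with weight $[t^k]S_l^\ell$, which justifies the substitution $\xface\leftarrow\sum_k [t^k](S_l^\ell)\,x_{\ell+k}$. For the root face of $\b$, each of its corners is likewise replaced by $S_l$ while keeping $t$ as the global root-degree marker, which is the substitution $t\leftarrow S_l$ in the second argument of $M_{2l}$. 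Edges are untouched, so $z$ is unchanged. Summing the two cases gives the first identity.

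The bridgeless identity follows by the same argument with the dual decomposition (bridgeless core plus bridges carrying maps), as in \cref{rem:M1-M2b-nnsi}. The only differences are the following. A bridge lies in a single face and adds two corners to it, which gives $t^2$. The map hanging from a bridge merges with the surrounding face rather than creating a new enclosed face, so it contributes $1+M_1(z,t;\X)$ with its root face merged into the host face. This yields $S_b$ and the coreless term $zt^2(1+M_1)^2$.

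I expect the main obstacle to be the root-face bookkeeping. We must verify that root-face corners of attached maps are correctly absorbed into the host face, so that no face is counted twice or forgotten. We must also verify that the distinction between marking with $t$ (root face) and marking with $\xface$ (non-root faces) is consistent after composition, in particular at the corner containing the root. I would settle these points by carefully fixing where the root corner lies after each attachment, and by checking both identities on small sizes.
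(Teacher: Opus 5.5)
Your decomposition is the one the paper sketches (coreless versus core case, one series $S_l$ or $S_b$ per corner of the core, substitution into $M_{2l}$ or $M_{2b}$). However, the root-face bookkeeping you defer is not a formality. Carried out, it does not give the displayed identities, and the small-size check you propose exposes this. First, in $M_{2l}(z,t;\X)$ a core whose root face has degree $d$ carries $t^d$, and after the insertions that face has degree $d+j$, with $j$ the number of loops in its corners. So each root corner contributes $t\,S_l$, and the substitution must be $t\leftarrow t\,S_l(z,t;\X)$ (resp.\ $t\leftarrow t\,S_b(z,t;\X)$). This is exactly parallel to your correct treatment of non-root faces, whose final degree you take to be $\ell+k$. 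With $t\leftarrow S_l$, the one-edge core $zt^2$ becomes $zS_l^2=z+O(z^2)$. The right-hand side then starts with $z+ztx_1$, whereas $M_1$ starts with $zt^2+ztx_1$.

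Second, when a non-empty map $\m'$ with root-face degree $\ell$ is placed inside a loop, the two half-edges of the loop are inserted into the root corner of $\m'$ and split it. So the enclosed face has degree $\ell+1$, not $\ell$; for example, a loop around a pendant edge encloses a face of degree $3$. Hence a root loop with a pendant edge on its non-root side contributes $z^2tx_3$ to $M_1$, and no such monomial appears on the displayed right-hand side. The marker has to be $x_1+\sum_{\ell\ge1}[t^\ell]M_1(z,t;\X)\,x_{\ell+1}$, both in the coreless term and inside $S_l$. Your phrase ``determined by the root-face degree $\ell$'' is exactly where this shift is lost. In the bridge case the same splitting at the far endpoint is one of the two corners in your $t^2$, which is why $S_b$ and $zt^2(1+M_1)^2$ are correct as stated. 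The paper's sketch writes the same formulas, so these look like slips in the statement itself. Once the bookkeeping is done, your argument proves the corrected identities (with $t\,S_l$, $t\,S_b$ and $x_{\ell+1}$), and those are what you should claim.
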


Moreover, the discussion of \cref{subsubsec:M2-M3-nnsi} gives the following.
\begin{proposition}
\label{prop:M2-M3-M4-M5-faces}
\Cref{prop:M2-M3-nnsi} holds when $\xallgonsr$ is replaced by $\xr$.
\end{proposition}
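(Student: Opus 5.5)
The statement to prove is \cref{prop:M2-M3-M4-M5-faces}: the relation $M_{2l}(z;\xr) = M_3(z(1+M_{2l}(z;\xr));\xr)$, and its analogue with $M_4$ and $M_5$, where now all faces of degree $\ell\in L$ are counted, including the root face. I would argue bijectively, following the simple-core decomposition of \cref{subsubsec:M2-M3-nnsi}. The point is that, in this decomposition, each face of the composed map comes from exactly one face of exactly one component, and keeps its degree.

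First I fix the decomposition. A loopless map $\m$ determines its simple core $\mathfrak{s}$ by collapsing each maximal bundle of parallel edges to one edge. Conversely, $\m$ is recovered from $\mathfrak{s}$ by substituting, at each edge $e$ of $\mathfrak{s}$, either nothing or a loopless map $\m_e$. The substitution glues the root edge of $\m_e$ onto $e$ together with an extra parallel edge. This gives the substitution $z \mapsto z(1+M_{2l})$ for the unmarked sizes, and the construction is a bijection. The root convention must be fixed so that the root face of $\m$ is the root face of $\mathfrak{s}$ (or is described consistently); I would take this from the $\xallgonsr$ case already established.

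Second, I track faces, sorting every face of $\m$ into one of three types.
\begin{enumerate}
\item Faces of $\mathfrak{s}$, including its root face. Substitution replaces the boundary edge $e$ by a single boundary edge of the bundle, namely the outermost copy on that side. So the degree is unchanged, and these faces are counted by the marks of $M_3$.
\item Non-root faces of each $\m_e$. These are untouched and are counted by the $\xr$ marks of $M_{2l}(z;\xr)$ inside the substitution.
\item The root face of each $\m_e$. It becomes an interior face bounded by the same edges, with the new parallel edge taking the place of the root edge's outer side. Its degree therefore equals the degree of the root face of $\m_e$.
\end{enumerate}
Type 3 is precisely why root faces must be marked too, with $\xfacer$ rather than $\xface$: only then do the marks inside $M_{2l}(z;\xr)$ account correctly for the new faces. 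The main care point is verifying that the degree is preserved in type 3 and that no face is counted twice. For this I would check the local picture at a bundle: between consecutive parallel edges lie exactly the faces coming from the attached maps. Since all maps are loopless, degenerate $1$-gons do not arise.

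Third, multiplying the weights over all edges yields the functional equation, since the face counts are additive over components. The $M_4/M_5$ case is identical, because $2$-connectivity is preserved by the bundle substitution in both directions. I expect the only real subtlety to be the root-face bookkeeping at the root edge of $\mathfrak{s}$ itself. If the root edge's own bundle would create an extra face adjacent to the root, I would resolve this by rooting the attached map so that its outer side is the new edge.
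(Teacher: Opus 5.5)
Your proposal is correct and follows the paper's argument. The paper also uses the simple-core substitution $z\mapsto z(1+M_{2l})$ and observes three things: faces of the core keep their degree, interior faces of each attached map $\m_e$ are untouched, and the root face of $\m_e$ becomes a new interior face of the same degree. That last point is exactly why $\xr$ must mark root faces on both sides. Your remarks on the rooting convention at the root bundle and on preservation of $2$-connectivity in the $M_4/M_5$ case spell out details the paper leaves implicit.
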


Finally, the previous discussions still hold in the bipartite case, with the difference that all faces must have even degrees.
\begin{proposition}
When considering only faces of even degrees:
\begin{itemize}
    \item \Cref{prop:M1-M3-faces} holds when replacing $M_1$ by $B_1$ and $M_{2b}$ by $B_3$;
    \item \Cref{prop:M2-M3-M4-M5-faces} holds when replacing $M_{2l}$ by $B_1$ and $M_3$ by $B_2$.
\end{itemize}
\end{proposition}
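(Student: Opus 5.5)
The plan is to re-run, inside the class of bipartite maps, the two combinatorial decompositions that gave \cref{prop:M1-M3-faces} and \cref{prop:M2-M3-M4-M5-faces}, and to check at each step that bipartiteness is preserved in both directions. The basic fact I will use is that a planar map is bipartite if and only if all its faces have even degree. In particular, only even face-degree variables $\xface$ ever carry weight, which is what ``considering only faces of even degrees'' means.

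\emph{First item (bridgeless decomposition).} A bipartite map has no loop, so the coreless case can only arise with the root edge a bridge. This is why the relevant relation is the second (bridgeless) equation of \cref{prop:M1-M3-faces}, with $M_{2b}$ replaced by $B_3$. I will show that the decomposition of a map $\m$ into its bridgeless core $\b$ and the bridge-attached pieces restricts to a bijection between bipartite objects. In one direction, $\b$ and the attached pieces are submaps of a bipartite map, hence bipartite. Conversely, gluing bipartite maps along a bridge lets one recolour one side if necessary, so the result is bipartite. The coreless term $z t^2 (1+B_1)^2$ is then unchanged.

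For the face-degree bookkeeping, a bridge carrying a (possibly empty) map inserted in a corner increases the degree of the surrounding face by $2$ plus the root-face degree of the attached map. Both quantities are even, and this is consistent with the weight $zt^2(1+B_1)$ appearing in $S_b$. The point I expect to need the most care is that the substitution $\xface \leftarrow \sum_k [t^k](S_b^\ell)\, x_{\ell+k}$ stays within even indices. For this I will first note that the root-face degree is always even, so $B_1(z,t;\X)$ is a series in $t^2$. It follows that $S_b$ is a series in $t^2$, hence only even $k$ occur, and $\ell+k$ is even whenever $\ell$ is. I will also re-check that every new face created by the gluing is accounted for by the $t$-marking of the root face, exactly as in the general case.

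\emph{Second item (simple core).} Collapsing each maximal multi-edge of a bipartite map keeps the vertex set and the adjacencies, so the simple core is a bipartite simple map, counted by $B_2$. Conversely, replacing each edge by a (possibly empty) bipartite map attached along its root edge, whose endpoints receive different colours, preserves a proper bicolouring. Since bipartite maps are automatically loopless, the attached pieces range exactly over $B_1$.

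The face accounting is then identical to \cref{subsubsec:M2-M3-nnsi}. Faces of the core and interior faces of the attached maps are preserved, and each attachment creates one new face whose degree equals the (even) root-face degree of the attached map. This is why the root-marked variables $\xr$ appear on both sides of the relation. The same argument applied to $2$-connected maps gives the $B_4$/$B_5$ version.
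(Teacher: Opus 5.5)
Your proposal is correct and follows the same route as the paper, which only remarks that the bridgeless-core and simple-core decompositions go through unchanged inside bipartite maps, all faces then having even degree. Your explicit checks (that both decompositions preserve bipartiteness in both directions, and that $S_b$ is a series in $t^2$ so the substitution only produces even indices) make that remark precise, and there is one harmless slip: the simple core does not keep the whole vertex set, since whatever lies between parallel edges goes into the attached maps, but it is a submap and hence bipartite anyway.
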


\section{Transferring singularity analysis}
\label{sec:transfer-sg-analysis}
We start with a simple inversion theorem for $3/2$-singularities (see \cref{eq32sing}).
\begin{lemma}\label{LeSing1}
    Suppose $f(z)$ has a $3/2$-singularity at $z_0$ of the form 
    \cref{eq32sing} with $g'(z_0)\ne 0$ and $h(z_0) \ne 0$.
    Then $f(z)=u$ can be locally
    inverted and the inverse function has 
    a $3/2$-singularity at $u_0 = g(z_0)$:
    \begin{equation}\label{eqLeSing1}
    z = G(u) + H(u) (u-u_0)^{3/2}
    \end{equation}
    with analytic functions $G(u)$, $H(u)$ that satisfy
    $G(u_0) = z_0$, $G'(u_0) = 1/g'(z_0)$, and 
    $H(u_0) = - h(u_0)/(g'(z_0))^{5/2}$.
\end{lemma}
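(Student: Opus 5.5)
Put $w = z - z_0$ and $v = u - u_0$. Near $z_0$ the equation $f(z)=u$ reads $v = g'(z_0)w + \tfrac12 g''(z_0)w^2 + \dots + h(z)w^{3/2}$. Since this is an analytic function of $w^{1/2}$, it is natural to work in the uniformizing variable $s = w^{1/2}$ and to invert a genuine power series in $s$.

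\textbf{Step 1 (Puiseux inversion).} Write $f(z_0+s^2) - u_0 = \phi(s)$, where $\phi(s) = \tilde g(s^2) + \tilde h(s^2)s^3$ is analytic at $s=0$ with $\phi(s) = g'(z_0)s^2 + h(z_0)s^3 + O(s^4)$. Because $g'(z_0)\neq 0$, we can write $\phi(s) = g'(z_0)s^2(1+\psi(s))$ with $\psi$ analytic and $\psi(0)=0$. Taking the analytic square root gives $\sigma(s) := s\sqrt{g'(z_0)}\,(1+\psi(s))^{1/2}$, so that $v = \sigma(s)^2$. Since $\sigma'(0) = \sqrt{g'(z_0)} \neq 0$, the analytic inverse function theorem produces $s = \tau(\sigma)$ analytic near $0$. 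Setting $\sigma = v^{1/2}$, we get $w = \tau(v^{1/2})^2$, which is analytic in $v^{1/2}$.

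\textbf{Step 2 (splitting into even and odd parts).} Let $T(\sigma) = \tau(\sigma)^2$ and decompose it as $T(\sigma) = E(\sigma^2) + \sigma^3 O(\sigma^2)$. This uses $T(\sigma) = \sigma^2/g'(z_0) + O(\sigma^3)$, so the odd part starts at order $3$. Then $G(u) = z_0 + E(u-u_0)$ and $H(u) = O(u-u_0)$ are analytic, and $z = G(u) + H(u)(u-u_0)^{3/2}$, which is \cref{eqLeSing1}. The branch choice of the square root matches the branch of $(z-z_0)^{3/2}$ in \cref{eq32sing}, because we fix the branch of $s = w^{1/2}$ consistently on both sides.

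\textbf{Step 3 (the constants).} From $v = g'(z_0)w + O(w^{3/2})$ we get $E'(0) = 1/g'(z_0)$. For $H(u_0)$, substitute $w = v/g'(z_0) + c\,v^{3/2} + O(v^2)$ into $v = g'(z_0)w + h(z_0)w^{3/2} + O(w^2)$. Comparing the $v^{3/2}$ terms gives $g'(z_0)c + h(z_0)(g'(z_0))^{-3/2} = 0$, hence $c = -h(z_0)/(g'(z_0))^{5/2}$, as stated (with $h$ evaluated at $z_0$).

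\textbf{Main obstacle.} The only delicate point is the branch and parity bookkeeping in Step 2. One must check that the even part of $T$ really depends only on $\sigma^2$, so that $G$ is analytic in $u$, and that no $\sigma^1$ term appears. The latter holds because $T = \tau^2$ with $\tau(\sigma) = \sigma/\sqrt{g'(z_0)} + O(\sigma^2)$. We must also confirm that $u_0 = f(z_0) = g(z_0)$, which follows by evaluating \cref{eq32sing} at $z = z_0$.
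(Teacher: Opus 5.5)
Your proof is correct, but it takes a different route from the paper's.

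\textbf{The paper's route.} The paper stays in the $z$-plane and removes the square root by squaring. This gives the analytic equation $Q(z,u)=(u-g(z))^2-h(z)^2(z-z_0)^3=0$, with $Q=Q_z=0$ and $Q_{zz}=2g'(z_0)^2\neq 0$ at $(z_0,u_0)$. The Weierstrass preparation theorem then reduces this to a monic quadratic $(z-z_0)^2+h_1(u)(z-z_0)+h_0(u)$, which is solved explicitly. By differentiating the factorization, the paper shows that the discriminant $h_1(u)^2/4-h_0(u)$ equals $C(u-u_0)^3+O((u-u_0)^4)$ with $C=h(z_0)^2/g'(z_0)^5\neq 0$. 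Its square root is therefore $H(u)(u-u_0)^{3/2}$. The values of $G'(u_0)$ and $H(u_0)$ come from a separate formal comparison of coefficients.

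\textbf{Your route.} You uniformize with $s=(z-z_0)^{1/2}$, so that $f$ becomes an analytic function of $s$ with a double zero. Extracting an analytic square root puts you in the setting of the ordinary one-variable inverse function theorem. You then read off $G$ and $H$ as the even and odd parts of $\tau(\sigma)^2$.

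\textbf{Comparison.} Your argument is more elementary. It needs no preparation theorem, and it avoids the ``few more derivatives'' needed to get $K_u(z_0,u_0)$ and $C$, which the paper leaves to the reader. Step~2 already proves rigorously that $z-z_0=v/g'(z_0)+t_3v^{3/2}+O(v^2)$, so your Step~3 is a genuine computation rather than a heuristic. You could even read $t_3=-h(z_0)/g'(z_0)^{5/2}$ directly from $\tau(\sigma)=\sigma/\sqrt{g'(z_0)}-h(z_0)\sigma^2/(2g'(z_0)^2)+O(\sigma^3)$. The paper's elimination instead ties the $3/2$-singularity of the inverse to the exact vanishing order of a discriminant. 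That is a natural viewpoint for algebraic generating functions and matches the Weierstrass-preparation style used in Theorem~\ref{LeSing2}.

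Both arguments produce the two branches of the inverse: the paper through the $\pm$ root of the quadratic, you through $\sigma=\pm v^{1/2}$. Both also extend directly to an analytic parameter as in Remark~\ref{rem:LeSing1-param-x}. For yours, set $z=\rho(x)+s^2$ and invert $\sigma(s,x)$ in $s$ by the implicit function theorem. You are also right that the $h(u_0)$ in the statement should read $h(z_0)$.
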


\begin{proof}
It is very easy to check this property in a formal way. By setting
$Z = z-z_0$ the relation \cref{eq32sing} rewrites (with $u = f(z)$) to
\[
u = g_0 + g_2 Z + g_3 Z^{3/2} + g_4 Z^2 + \cdots,
\]
where $g_0 = g(z_0)$, $g_2 = g'(z_0) \ne 0$, and $g_3 = h(z_0) \ne 0$.
Setting $U = u-g_0$ we also have
\begin{equation}\label{eq32singvar}
U = g_2 Z + g_3 Z^{3/2} + g_4 Z^2 + \cdots
\end{equation}

If we now assume that the inverted function expands in a corresponding
way, then we should have a relation of the form
\[
Z = h_2 U + h_3 U^{3/2} + h_4 U^2 + \cdots
\]
with $h_2\ne 0$ and $h_3\ne 0$.
From that it would follow that
\[
Z^{3/2} = h_2^{3/2} U^{3/2}\left( 1 + \frac{3h_3}{2h_2} U^{1/2} + \cdots \right).
\]
By inserting the representation for $Z$ and $Z^{3/2}$ into \cref{eq32singvar}
and by comparing the coefficients of $U$ and $U^{3/2}$ it follows that
$
g_2 h_2 = 1 \quad \mbox{and} \quad g_2 h_3 + g_3 h_2^{3/2}
$
which implies that
$
h_2 = \frac 1{g_2} \quad \mbox{and} \quad h_3 = -{g_3}/{g_2^{5/2}}.
$

The main difficulty in the proof is to show that there is representation
of the form \cref{eqLeSing1} for the inverse function.
    We set $f(z) = u$ and rewrite \cref{eq32sing} to
    \[
    Q(z,u) = (u- g(z))^2 - h(z)^2 (z-z_0)^3 = 0.
    \]
    At $z=z_0$ and $u = u_0 = g(z_0)$ we have
    \[
    Q(z_0,u_0) = Q_{z}(z_0,u_0) = 0, \quad  Q_{zz}(z_0,u_0) = 2g'(z_0)^2 \ne 0.
    \]
    Hence, by the Weierstrass theorem we can represent $Q(z,u)$ locally as
    \[
    Q(z,u) = K(z,u)( (z-z_0)^2 + h_1(u)(z-z_0) + h_0(u)),
    \]
    where $h_0(u), h_1(u), K(z,u)$ are analytic and satisfy
    $h_0(u_0) = h_1(u_0) = 0$ and $K(z_0,u_0) \ne 0$.
    Hence, if we want to invert the function $u = f(z)$
    we have to solve the quadratic equation
    \[
    (z-z_0)^2 + h_1(u)(z-z_0) + h_0(u) = 0.
    \]
    Clearly, solutions are given by
    \[
    z = z_0 - \frac{h_1(u)}2 \pm \sqrt{\frac{h_1(u)^2}4 - h_0(u)}.
    \]
    It remains to study the term $\frac{h_1(u)^2}4 - h_0(u)$. 
    From the relation
    \[
    (u- g(z))^2 - h(z)^2 (z-z_0)^3 = K(z,u)( (z-z_0)^2 + h_1(u)(z-z_0) + h_0(u))
    \]
    it follows (by setting $z=z_0$) that
    $
    h_0(u) = {(u-u_0)^2}/{K(z_0,u)}
    $
    and that (after differentiating with respect to $z$ and setting $z=z_0$)
    \[
    h_1(u) = - \frac{2(u-u_0)g'(z_0) + K_z(z_0,u)h_0(u)}{K(z_0,u)}.
    \]
    Furthermore, by differentiating twice with respect to $z$ and setting
    $z=z_0$ and $u=u_0$ we obtain
    $
    K(z_0,u_0) = g'(z_0)^2.
    $
    This implies that
    \[
    \frac{h_1(u)^2}4 - h_0(u) = O((u-u_0)^3).  
    \]
    By computing few more derivatives we can also compute $K_u(z_0,u_0)$
    which gives 
    \[
    \frac{h_1(u)^2}4 - h_0(u) = C(u-u_0)^3   + O(u-u_0)^4
    \]
    with
    $
    C = \frac{h(z_0)^2}{ g'(z_0)^5} \ne 0.
    $
    This finally implies that
    \[
    \sqrt{\frac{h_1(u)^2}4 - h_0(u)} = H(u) (u-u_0)^{3/2}
    \]
    for an analytic function $H(u)$ with $H(u_0)\ne 0$ and completes the proof of the theorem.
\end{proof}

\begin{remark} \label{rem:LeSing1-param-x}\Cref{LeSing1} directly extends to functions
that depend (analytically) on a further parameter $x$ (locally around $x=x_0$). 
Suppose that a function $f(z,x)$ has a \emph{moving} $3/2$-singularity of the form \cref{eq32sing2}
with analytic functions $g(z,x)$, $h(z,x)$, $\rho(x)$ with $g_z(x,\rho(x)) \ne 0$.
Then we invert the relation $u = f(z,x)$ to 
\[
z = G(u,x) + H(u,x) (u-\kappa(x))^{3/2}
\]
for proper analytic functions $G(z,x)$, $H(z,x)$ and  $\kappa(x) = g(\rho(x),x)$. In that case, it holds that
\[G(\rho(x),x) = \rho(x),\quad G_u(\rho(x),x) = \frac{1}{g_z(\rho(x),x)}\quad\text{and}\quad H(\rho(x),x) = -\frac{h(\rho(x),x)}{g_z(\rho(x),x)^{5/2}}.\]
\end{remark}

We are now ready to prove a more general transfer property for
$3/2$-singularities.
\begin{theorem}\label{LeSing2}
Suppose that the functions $f_1(z,x)$, $f_2(z,x)$ have $3/2$-singularities
in $z$, where $x$ is an additional analytic variable that varies locally
around $x= x_0$, of the form
\[
      f_1(z,x) = g_1(z,x) + h_1(z,x) (z-\rho(x))^{3/2}, \quad
      f_2(z,x) = g_2(z,x) + h_2(z,x) (z-\rho(x))^{3/2}
\]
with analytic functions $g_1(z,x)$, $h_1(z,x)$, $g_2(z,x)$, $h_2(z,x)$, $\rho(x)$
that satisfy:
\[
g_{1,z}(\rho(x),x) \ne 0, \quad h_1(\rho(x),x)\ne 0, \quad
g_{2,z}(\rho(x),x) \ne 0, \quad h_2(\rho(x),x)\ne 0, 
\]
\[
h_2(\rho(x),x)g_{1,z}(\rho(x),x) \ne h_1(\rho(x),x)g_{2,z}(\rho(x),x).
\]
Then the relation $u = f_1(z,x)$, $v = f_2(z,x)$ can be locally inverted 
such that there are corresponding $3/2$-singularities:
\[
      z = G_1(u,v) + H_1(u,v) (v-R(u))^{3/2}, \quad
      x = G_2(u,v) + H_2(u,v) (v-R(u))^{3/2}
\]
with proper analytic functions $G_1(u,v)$, $H_1(u,v) $,  $G_2(u,v)$, $H_2(u,v)$,
$R(u)$ that satisfy corresponding conditions:
\[
G_{1,u}(u,R(u)) \ne 0, \quad H_1(u,R(u))\ne 0, \quad
G_{2,u}(u,R(u)) \ne 0, \quad H_2(u,R(u))\ne 0, 
\]
\[
H_2(u,R(u))G_{1,z}(u,R(u)) \ne H_1(u,R(u))G_{2,z}(u,R(u)).
\]
The function $R(u)$ is given by
$R(u) = g_2(\rho(\beta(u)),\beta(u))$, where $\beta(u)$ is the inverse 
function of $\alpha(x) = g_1(\rho(x),x)$.
\end{theorem}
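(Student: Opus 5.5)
The plan is to remove the fractional power by a uniformizing variable, solve one equation with the implicit function theorem, and reduce what remains to a one-variable inversion of the type in \cref{LeSing1}. Put $z = \rho(x) + s^2$. Then both relations become analytic in $(s,x)$ near $(0,x_0)$:
\[
u = A(s,x) := g_1(\rho(x)+s^2,x) + h_1(\rho(x)+s^2,x)\, s^3, \qquad
v = B(s,x) := g_2(\rho(x)+s^2,x) + h_2(\rho(x)+s^2,x)\, s^3 .
\]
The Jacobian of $(s,x)\mapsto(u,v)$ is degenerate at $s=0$, because $A_s(0,x)=B_s(0,x)=0$. This degeneracy is exactly the fold responsible for the $3/2$-singularity, so the inversion has to be done in two stages.

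\textbf{First stage: eliminate $x$.} We have $A(0,x) = \alpha(x) = g_1(\rho(x),x)$. The statement presupposes that $\beta = \alpha^{-1}$ exists, so we take $\alpha'(x_0)\neq 0$ as an implicit hypothesis. The implicit function theorem then solves $u = A(s,x)$ as $x = X(s,u)$, analytic in $(s,u)$, with $X(0,u) = \beta(u)$. Moreover $X_s(0,u) = -A_s/A_x = 0$.

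\textbf{Second stage: invert the remaining relation.} Substitute to get $v = C(s,u) := B(s,X(s,u))$. Its expansion in $s$ is
\[
C(s,u) = c_0(u) + c_2(u)s^2 + c_3(u)s^3 + \cdots, \qquad c_0(u) = g_2(\rho(\beta(u)),\beta(u)) = R(u).
\]
There is no linear term, again because $X_s(0,u)=0$.
\begin{itemize}
\item The coefficient $c_2$ comes from $g_{2,z}$ together with the $s^2$-correction of $X$. That correction is $X_{ss}/2 = -g_{1,z}/\alpha'$, so $c_2 = g_{2,z} - (g_{2,z}\rho' + g_{2,x})\,g_{1,z}/\alpha'$, evaluated at $(\rho(\beta(u)),\beta(u))$.
\item The coefficient $c_3$ comes from $h_2$ and $h_1$, and turns out to be a multiple of $h_2 g_{1,z} - h_1 g_{2,z}$ (up to the factor $\alpha'$). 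This is where the determinant hypothesis enters.
\end{itemize}
Assuming $c_2(u_0)\neq 0$, set $t = \sqrt{v-R(u)}$. The equation $t^2 = c_2 s^2(1 + (c_3/c_2)s + \cdots)$ is inverted by the analytic implicit function theorem as
\[
s = \frac{t}{\sqrt{c_2(u)}}\Bigl(1 - \frac{c_3}{2c_2^{3/2}}\, t + \cdots\Bigr),
\]
which is analytic in $(t,u)$. This is \cref{LeSing1} in the variable $s$, with $u$ as parameter in the sense of \cref{rem:LeSing1-param-x}.

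\textbf{Reading off the representations.} Now $x = X(s,u)$ and $z = \rho(X(s,u)) + s^2$ are analytic functions of $(t,u)$. Split each into its even and odd parts in $t$. The even part is an analytic function of $t^2 = v-R(u)$, which gives $G_1$ and $G_2$. The odd part is $t$ times an analytic function of $t^2$. Since $\partial_s X = 0$ and $\partial_s(\rho(X)+s^2)=0$ at $s=0$, there is no term linear in $t$, so the odd part equals $t^3 = (v-R(u))^{3/2}$ times an analytic function, which gives $H_1$ and $H_2$.

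\textbf{Nondegeneracy conditions.} These are obtained by tracking leading coefficients.
\begin{itemize}
\item $H_2(u,R(u))$ is proportional to the $s^3$-coefficient of $X$, namely $-h_1/\alpha'$, combined with the $t^2$-correction of $s$, which involves $c_3$. Hence $H_2(u,R(u))$ reduces to a combination of $h_1$ and $c_3$.
\item $H_1(u,R(u))$ is treated the same way.
\item $G_{1,u}$ and $G_{2,u}$ come from $\beta'(u) = 1/\alpha'$ and $\rho'$.
\item The final determinant inequality should be a rescaling of $h_2 g_{1,z} \neq h_1 g_{2,z}$, multiplied by a nonzero power of $c_2$ and $\alpha'$.
\end{itemize}

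I expect the main obstacle to be this last bookkeeping. I must show that $c_2 \neq 0$, which may require adding a genericity assumption or extracting it from the determinant condition, and that all four transferred conditions follow from the stated hypotheses. I would first compute everything at the single point $(u_0,R(u_0))$ with $x_0$ fixed, and verify that each required nonvanishing expression factors through $g_{1,z}$, $h_1$, $h_2$, $\alpha'$, or the determinant.
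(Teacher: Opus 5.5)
Your uniformization route is genuinely different from the paper's, and its existence part is sound, arguably cleaner. The nondegeneracy part, however, rests on a false claim, and the bookkeeping you defer cannot be completed as you expect.

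\textbf{Comparison with the paper and the hypothesis $c_2\neq0$.} The paper applies \cref{LeSing1} with parameter $x$ to $u=f_1$. It substitutes the result into $f_2$ to get $v=J_3+J_4(u-\alpha(x))^{3/2}$, with $J_4(\alpha(x),x)=(h_2g_{1,z}-h_1g_{2,z})/g_{1,z}^{5/2}$. It then turns $u-\alpha(x)$ into $x-\beta(u)$ by Weierstrass preparation and applies \cref{LeSing1} once more, now in $x$. Your substitution $z=\rho(x)+s^2$ replaces all this handling of fractional powers by the analytic implicit function theorem plus an even/odd splitting in $t$. It needs no nonvanishing of $c_3$, and it makes every leading coefficient computable by matching power series. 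You are right that $c_2\neq0$ is needed. It cannot, however, be extracted from the determinant hypothesis, because $\alpha'c_2=g_{1,x}g_{2,z}-g_{1,z}g_{2,x}$ involves only the $g_i$. It must be added as a hypothesis: the Jacobian of $(g_1,g_2)$ in $(z,x)$ is nonzero at $(\rho(x_0),x_0)$. The paper's proof uses the same condition silently, since its second application of \cref{LeSing1} requires $J_{3,x}(\alpha(x),x)=(g_{1,z}g_{2,x}-g_{1,x}g_{2,z})/g_{1,z}\neq0$.

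\textbf{The claim about $c_3$ is false.} $c_3$ is not a multiple of $h_2g_{1,z}-h_1g_{2,z}$. With $\gamma(x)=g_2(\rho(x),x)$ you get $c_3=h_2-\gamma'h_1/\alpha'$, that is, $\alpha'c_3=\rho'(h_2g_{1,z}-h_1g_{2,z})+(h_2g_{1,x}-h_1g_{2,x})$. The determinant hypothesis enters through $H_2$ instead. Write $X=\beta+X_2s^2+X_3s^3+\cdots$ with $X_2=-g_{1,z}/\alpha'$ and $X_3=-h_1/\alpha'$. Then the $t^3$-coefficient of $x$ is $(X_3c_2-X_2c_3)/c_2^{5/2}=(h_2g_{1,z}-h_1g_{2,z})/(\alpha'c_2^{5/2})$.

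\textbf{The deferred bookkeeping fails.} Read the derivative conditions as $\partial_v$, the analogue of $\partial_z$; the statement's $G_{1,z}$ is a typo. Carrying the bookkeeping out gives, on $v=R(u)$,
\[
G_{2,v}=-\frac{g_{1,z}}{\alpha'c_2},\quad G_{1,v}=\frac{g_{1,x}}{\alpha'c_2},\quad H_1=-\frac{h_2g_{1,x}-h_1g_{2,x}}{\alpha'c_2^{5/2}},\quad H_2G_{1,v}-H_1G_{2,v}=-\frac{h_1}{\alpha'c_2^{5/2}}.
\]
So $H_1\neq0$ and $G_{1,v}\neq0$ do not follow from the hypotheses. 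For a concrete example, take $\rho\equiv0$, $u=z+x+z^{3/2}$ and $v=z+2x+2z^{3/2}$. Every hypothesis holds, with $\alpha'=1$, $c_2=-1$ and $R(u)=2u$. Yet $v-R(u)=-z$ identically, so $z=2u-v$ and $H_1\equiv0$. The same example has $c_3=0$ although $h_2g_{1,z}-h_1g_{2,z}=1$.

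The statement's claim $H_1(u,R(u))\neq0$ is therefore false in general, whichever way its derivative conditions are read. The paper's proof has the same hole, since it asserts the properties of $G_1,H_1$ ``in completely the same way''. What your method does establish, under the added hypotheses $\alpha'(x_0)\neq0$ and $c_2(u_0)\neq0$, is the representation together with $G_{2,v}\neq0$, $H_2\neq0$ and $H_2G_{1,v}\neq H_1G_{2,v}$. You should state the result in that corrected form rather than try to derive $H_1\neq0$.
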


\begin{remark}\label{rem:LeSing2-more-variables}
\Cref{LeSing2} extends to more than $2$ variables. For the sake of brevity we
do not make this explicit. The proof can be done, for example, by an induction
on the number of variables.
\end{remark}

\begin{proof}
    The idea of the proof is to apply \cref{LeSing1} twice, however, in
    the more general form with an additional analytic parameter (as mentioned in \cref{rem:LeSing1-param-x}
    above).

    Let's start with the relation
    \[
    u = f_1(z,x) = g_1(z,x) + h_1(z,x) (z-\rho(x))^{3/2}
    \]
    that translates to 
    \[
    z = J_1(u,x) + J_2(u,x) (u- \alpha(x))^{3/2},
    \]
    with $\alpha(x) = g_1(\rho(x),x)$. Furthermore we have
    \[
    J_1(\alpha(x),x) = \rho(x), \quad 
    J_{1,u}(\alpha(x),x) = \frac 1{g_{1,z}(\rho(x),x)} \ne 0, \quad
    J_2(\alpha(x),x) = - \frac{h_1(\rho(x),x)}{g_{1,z}(\rho(x),x)^{5/2}} \ne 0.
    \]
    This relation is now used to substitute $z$ in the 
    relation $v = f_2(z,x) = g_2(z,x) + h_2(z,x) (z-\rho(x))^{3/2}$
    which leads to
    \begin{align*}
        v &= f_2(z,x) = g_2(z,x) + h_2(z,x) (z-\rho(x))^{3/2} \\
        &= g_2(J_1(u,x) + J_2(u,x) (u- \alpha(x))^{3/2},x)  \\
        &+ h_2(J_1(u,x) + J_2(u,x) (u- \alpha(x))^{3/2},x) 
        \left(  J_1(u,x) + J_2(u,x) (u- \alpha(x))^{3/2} - \rho(x) \right)^{3/2}.
    \end{align*}
    Clearly, we can represent $g_2(J_1(u,x) + J_2(u,x) (u- \alpha(x))^{3/2},x)$ by
    applying a local Taylor expansion as
    \[
    g_2(J_1(u,x) + J_2(u,x) (u- \alpha(x))^{3/2},x) 
    = L_1(u,x) + L_2(u,x)  (u- \alpha(x))^{3/2}
    \]
    with analytic functions $L_1(u,x)$ and $L_2(u,x)$, where $L_1(\alpha(x),x) = g_2(\rho(x),x)$,
    \[L_{1,u}(\alpha(x), x) = \frac{g_{2,z}(\rho(x),x)}{g_{1,z}(\rho(x),x)}\quad\text{and}\quad L_2(\alpha(x),x) = g_{2,z}(\rho(x),x) J_2(\rho(x),x).\] Similarly we obtain
    \[
    h_2(J_1(u,x) + J_2(u,x) (u- \alpha(x))^{3/2},x) 
    = L_3(u,x) + L_4(u,x)  (u- \alpha(x))^{3/2}
    \]
    with analytic functions $L_3(u,x)$ and $L_4(u,x)$, where $L_3(\alpha(x),x) = h_2(\rho(x),x)$. Finally, since
    $J_1(\alpha(x),x) = \rho(x)$
    we have
    \begin{align*}
       \left(  J_1(u,x) + J_2(u,x) (u- \alpha(x))^{3/2} - \rho(x) \right)^{3/2}
       &= \left( \tilde J(u,x)(u-\alpha(x)) +  J_2(u,x) (u- \alpha(x))^{3/2}         \right)^{3/2} \\
       &= L_5(u,x) (u-\alpha(x))^{3/2} + L_6(u,x)(u-\alpha(x))^{2}
    \end{align*} 
    with analytic functions $\tilde J(u,x)$, $L_5(u,x)$, $L_6(u,x)$
    that satisfy $\tilde J(\alpha(x),x) = J_{1,u}(\alpha(x),x) \ne 0$ and 
    $L_5(\alpha(x),x) = J_{1,u}(\alpha(x),x)^{3/2} = g_{1,z}(\rho(x),x)^{-3/2} \ne 0$. Summing up, this leads to an expansion of the form
    \[
    v = J_3(u,x) + J_4(u,x) (u- \alpha(x))^{3/2}
    \]
    with proper analytic functions $J_3(u,x), J_4(u,x)$ that satisfy
    \[
    J_3(\alpha(x),x) = L_1(\alpha(x),x) = g_2(\rho(x),x), \quad
    J_{3,u}(\alpha(x),x) = L_{1,u}(\alpha(x),x) = \frac{g_{2,z}(\rho(x),x)}{g_{1,z}(\rho(x),x)},
    \]
    \[
    J_4(\alpha(x),x) = L_2(\alpha(x),x) + L_3(\alpha(x),x) L_5(\alpha(x),x) = \frac{h_2(\rho(x),x)g_{1,z}(\rho(x),x)-h_1(\rho(x),x) g_{2,z}(\rho(x),x)}{g_{1,z}(\rho(x),x)^{5/2}}.
    \]

    In the next step we apply the Weierstrass preparation theorem to 
    represent $u- \alpha(x)$  (locally around $u=u_0 = f_1(\rho(x_0),x_0)$ and $x = x_0$)  as 
    \[
    u- \alpha(x) = K(u,x)( x - \beta(u)),
    \]
    where $\beta(u)$ is the inverse function of $\alpha(x)$ and $K(u,x)$
    is an analytic function with $K(u,x)\ne 0$. This leads to representation
    of the kind
    \[
        z =  J_1(u,x) + \overline J_2(u,x) (x-\beta(u))^{3/2}, \quad
        v = J_3(u,x) + \overline J_4(u,x) (x- \beta(u))^{3/2}
    \]
    with $\overline J_2(u,x) = J_2(u,x) K(u,x)^{3/2}$ and
    $\overline J_4(u,x) = J_4(u,x) K(u,x)^{3/2}$. 
    Note that this change of variables does not change the properties 
    that the coefficients of $(x-\beta(u))$ and $(x-\beta(u))^{3/2}$ are non-zero.
    Furthermore, since 
    \[
    \frac{1}{g_{1,z}}\left( \frac{h_2g_{1,z}-h_1g_{2,z}}{g_{1,z}^{5/2}} \right)
    + \frac{g_{2,z}}{g_{1,z}} \frac{h_1}{g_{1,z}^{5/2}} = \frac{h_2}{g_{1,z}^{5/2}} \ne 0
    \]
    it also follows that, in $(\alpha(x),x)$:
    \[
    J_4 J_{1,u} - J_2 J_{3,u} \ne 0.
    \]

    In a second step we invert the relation 
    $v = J_3(u,x) + \overline J_4(u,x) (x- \beta(u))^{3/2}$ 
    (with the parametrized version of 
    \cref{LeSing1}) to
    \[
    x = G_2(u,v) + H_2(u,v) (v-R(u))^{3/2}
    \]
    with proper analytic functions $G_2(u,v)$, $H_2(u,v)$, $R(u)$ and 
    use this relation to substitute $x$ into the relation
    $z =  J_1(u,x) + \overline J_2(u,x) (x-\beta(u))^{3/2}$ which leads
    in completely the same way as above to the representation
    \[
    z = G_1(u,v) + H_1(u,v) (v-R(u))^{3/2}
    \]
    with proper analytic functions $G_1(u,v)$, $H_1(u,v)$ that satisfy
    the proposed properties. This completes the proof of the lemma.
\end{proof}

\section{Applications}
\label{sec:applications}
\subsection{\twoc{} planar maps}

Let's start with the generating function ${M_1}(z,x)$ of planar maps, where
$z$ counts the edges and $x$ the number of \ellgons{} which have distinct vertices and edges. We know from \cref{prop:state-art-3-2-sg} that ${M_1}(z,x)$ has a $3/2$-singularity that moves in $x$ that we represent as 
\[
{M_1}(z,x) = g(z,x) + h(z,x) (z-\rho(x))^{3/2}.
\]
Note that the singular part is usually written as 
$(1- z/\rho(x))^{3/2}$ but this is reflected in a minor change in 
the function $h(z,x)$.

From \cref{prop:M1-M4-joint-ellgons} we know that ${M_1}(z,x)$ satisfies the relation \cref{eqM1M4rel}
where ${M_4}(u,v)$ is the corresponding generating function for
2-connected planar maps. We now consider the relations
\[
    u = z{M_1}(z,x)^2, \quad
    v = \frac{{M_1}(z,x)^\ell-1 + x}{{M_1}(z,x)^\ell}.
\]
By \cref{eqM1M4rel} it follows that these two relations satisfy the
assumptions of \cref{LeSing2}. Thus, it follows that these two
relations can be inverted in order to obtain relations of the form
\[
      z = G_1(u,v) + H_1(u,v) (v-R(u))^{3/2}, \quad
      x = G_2(u,v) + H_2(u,v) (v-R(u))^{3/2}.
\]
Since ${M_4}(u,v) = {M_1}(z,x) = \sqrt{ \frac uz}$
we also get a relation of the form
\[
{M_4}(u,v) = G(u,v) + H(u,v) (v-R(u))^{3/2}
\]
for proper analytic functions $G(u,v)$, $H(u,v)$ as proposed.

The case of several \ellgons{} counts can be handled in
the same way. One just need an extension of \cref{LeSing2} to
several variables (see \cref{rem:LeSing2-more-variables}).

The case of non-intersecting pattern counts is also immediate.

\subsection{Face counts in loopless and  bridgeless maps}

For the case of face counts in loopless or bridgeless maps   
(compare with Proposition~\ref{prop:M1-M3-faces})
we have to proceed in a slightly different way and for the sake of brevity
we only consider the case $\ell = 2$ in brideless maps, that is, we consider the equation
\begin{align*}
{M_1}(z,t;x_1,x_2) 
&= {M_{2b}}\Biggl( z, (1-zt^2{M_1}(z,t;x_1,x_2))^{-1}; \\
& \tilde  x_1 \leftarrow 
x_1 + [t](1-zt^2{M_1}(z,t;x_1,x_2))^{-1} x_2  +\sum_{k=2}^{\infty} [t^k] (1-zt^2{M_1}(z,t;x_1,x_2))^{-1}, \\
&  \tilde x_2 \leftarrow 
x_2 + \sum_{k=1}^{\infty} [t^k] (1-zt^2{M_1}(z,t;x_1,x_2))^{-2} \biggr)
+ zt^2(1+{M_1}(z,t;x_1,x_2))^2 .   
\end{align*}
We recall that ${M_1}(z,t;x_1,x_2)$ has a local representation of the form
(see \cite{CDK19})
\[
{M_1}(z,t;x_1,x_2) = g(z,t,x_1,x_2) + h(z,t,x_1,x_2) (z- \rho(x_1,x_2))^{3/2},
\]
where the functions $g(z,t,x_1,x_2)$, $h(z,t,x_1x_2)$, and $\rho(x_1,x_2)$ are analytic. 
In particular, it follows that all functions that are substituted in $M_{2b}$ have a
$3/2$-singularity:
\begin{align*}
    x_1 + [t](1-zt^2{M_1}(z,t;x_1,x_2))^{-1} x_2 &= g_1(z, x_1,x_2)+ h_1(z,x_1,x_2) (z- \rho(x_1,x_2))^{3/2}, \\
    \sum_{k=2}^{\infty} [t^k] (1-zt^2{M_1}(z,t;x_1,x_2))^{-1} &= 
    g_2(z, x_1,x_2)+ h_2(z,x_1,x_2) (z- \rho(x_1,x_2))^{3/2}, \\
    x_2 +\sum_{k=1}^{\infty} [t^k] (1-zt^2{M_1}(z,t;x_1,x_2))^{-2}  &= g_3(z, x_1,x_2)+ h_3(z,x_1,x_2) (z- \rho(x_1,x_2))^{3/2}.
\end{align*}
We now apply the substitution 
\begin{align*}
\tilde t &= (1-zt^2{M_1}(z,t;x_1,x_2))^{-1} = g_0(z,t,x_1,x_2) + h_0(z,t,x_1,x_2) (z- \rho(x_1,x_2))^{3/2}, \\
\tilde x_1 &= (g_1(z, x_1,x_2)+g_2(z,x_1,x_2))+ (h_1(z,x_1,x_2)+h_2(z,x_1,x_2)) (z- \rho(x_1,x_2))^{3/2}, \\
\tilde x_2 &= g_3(z, x_1,x_2)+ h_3(z,x_1,x_2) (z- \rho(x_1,x_2))^{3/2}.
\end{align*}
We first invert the last two equations and obtain
\begin{equation}
\label{eqx2tildex2}
x_1 = \overline G_1(z,\tilde x_1,\tilde x_2) + \overline H_1(z,\tilde x_1, \tilde x_2) 
(z- \overline \rho(\tilde x_2))^{3/2}, \quad
x_2 = \overline G_2(z,\tilde x_1,\tilde x_2) + \overline H_2(z,\tilde x_1, \tilde x_2) 
(z- \overline \rho(\tilde x_2))^{3/2}
\end{equation}
Secondly we apply a regular inversion to the first equation and obtain
\[
t = \overline g( z,\tilde t, x_1,x_2) + \overline h( z,\tilde t, x_1,x_2) (z- \rho(x_1,x_2))^{3/2}
\]
By substituting $x_1$  and $x_2$ with the 
help of \cref{eqx2tildex2} we get a representation of the form
\begin{equation}
\label{eqttildet}
t = \overline g_2( z,\tilde t, \tilde x_1,\tilde x_2) + \overline h_2( z,\tilde t, \tilde x_1, \tilde x_2) 
(z- \overline \rho(\tilde x_1, \tilde x_2))^{3/2}.    
\end{equation}
Finally, by substituting $t$, $x_1$, and $x_2$ with the help of
\cref{eqttildet} and \cref{eqx2tildex2} we obtain
\begin{align*}
M_{2b}(z,\tilde t; \tilde x_1,\tilde x_2) &= M_1(z,t;x_1,x_2) - zt^2 ( 1 + M_1(z,t;x_1,x_2))^2\\
&= \tilde G(z,\tilde t, \tilde x_1, \tilde x_2) +  \tilde H(z,\tilde t, \tilde x_1, \tilde x_2) 
(z- \overline \rho(\tilde x_1, \tilde x_2))^{3/2}
\end{align*}
with proper analytic functions $\tilde G(z,\tilde t, \tilde x_1, \tilde x_2), 
\tilde H(z,\tilde t, \tilde x_1, \tilde x_2)$.

\subsection{Other cases}

All the other cases (involving loopless, bridgeless, simple, and bipartite maps) 
can be handled by the same principles.

\bibliography{references.bib}

\end{document}